\newtheorem{thm}{Theorem}
\newtheorem{prop}{Proposition}
\newtheorem{lem}{Lemma}
\newtheorem{cor}{Corrollary}
\newtheorem{dfn}{Definition}
\newtheorem{exm}{Example}
\newtheorem{rmk}{Remark}
\title{
 Non-isomorphic Cayley Graphs with
 Same Random Walk Distributions
}
\author[1]{
Masao Ishikawa
}
\author[2]{
Fumihiko Nakano
}
\author[3]{
Taizo Sadahiro
}
\affil[1]{Department of Mathematics, Okayama University}
\affil[2]{Mathematical Institute, Tohoku University}
\affil[3]{Department of Computer Science, Tsuda University}
\date{}
\begin{document}

\maketitle

\begin{abstract}
 We construct an infinite family of
 triples $(G, S_1, S_2)$ each consisting of a group $G$
 and a pair $(S_1, S_2)$ of distinct subsets
 of $G$ with the following properties.
 \begin{enumerate}[font=\bfseries, label=\roman*]
  \item The two Cayley graphs 
	${\tt Cay}(G, S_1)$ and ${\tt Cay}(G, S_2)$ are non-isomorphic.
  \item 
	The distributions of the simple random walks on 
	${\tt Cay}(G, S_1)$ and ${\tt Cay}(G, S_2)$
	are the same  if one applies an
	appropriate bijection between the two vertex sets
	at each step.
  \item 
	The spectral set of ${\tt Cay}(G, S_i)$
	is decomposed into a disjoint union of two subsets
	$A$ and $B_i$ of the equal size ($|A|=|B_i|=|G|/2$),
	which satisfies
	$B_2=-B_1=\left\{-\lambda\,|\,\lambda\in B_1\right\}$.
 \end{enumerate}
 As a byproduct, 
 an infinite family of the pairs of isomorphic Cayley graphs
 on non-isomorphic groups is obtained.
\end{abstract}

Keywords: Cayley graph, random walk, spectra, zeta function, sliding block puzzle

\section{Introduction}
The simple random walks on non-isomorphic Cayley graphs generally 
have different total variation distances from the uniform distribution.
However, rare exceptions exist.
We present an infinite family of such exceptions, i.e., 
non-isomorphic pairs of Cayley graphs
whose simple random walks have exactly the same total variation distance
from the uniform distribution at each step.
Each pair is given as a pair of distinct quotient graphs of the same graph,
whose construction  is similar to the Terras's construction
 \cite{terras2010zeta}
of the isospectral non-isomorphic graphs.
(See also \cite{buser2006cayley} and \cite{brooks1999sunada}
for the details of the construction of isospectral graphs.)
However, the pairs we construct are not isospectral pairs.
Their spectral sets are half the identical and half opposite.

%P. Buser \cite{buser2006cayley} constructed 
%an example of a pair of non-isomorphic isospectral graphs with
%seven vertices by using the Sunada's method \cite{brooks1999sunada}.
%This pair has been modified to a pair of
%cubic graphs with $28$ vertices without self-loops,
%which are given as two intermediate covers
%of a certain graph covering in \cite{terras2010zeta}.
%%\cite{fujii1999isospectral,halbeisen1999,terras2010zeta}.
%This paper deals with the graphs with {\em half the same, half opposite} spectral sets,
%which are also presented as certain intermediate covers.
To be more precise, the main aim of this paper is to give a simple and  explicit construction of  an infinite
family of triples $(G, S_1, S_2)$ each consisting of a group $G$
 and a pair
 $(S_1, S_2)$ of distinct subsets
 of $G$ with the following properties.
 \begin{enumerate}[font=\bfseries, label=\roman*]
  \item \label{cond:nonisomorphic}
	The two Cayley graphs 
	${\tt Cay}(G, S_1)$ and ${\tt Cay}(G, S_2)$ are non-isomorphic.
  \item \label{cond:RW}
	The distributions of the simple random walks on 
	${\tt Cay}(G, S_1)$ and ${\tt Cay}(G, S_2)$
	are the same 
	if one applies an appropriate bijection between the two  vertex sets
	at each step.
  \item \label{cond:spectral}
	The spectral set of ${\tt Cay}(G, S_i)$
	is decomposed into a disjoint union of two subsets
	$A$ and $B_i$ of the equal size ($|A|=|B_i|=|G|/2$)
	which satisfies
	$B_2=-B_1=\left\{-\lambda\,|\,\lambda\in B_1\right\}$.
 \end{enumerate}
 As a byproduct, 
 an infinite family of the pairs of isomorphic Cayley graphs
 on non-isomorphic groups is obtained.

Here we show a concrete example which illustrates our 
results in this paper.
For a group $G$ and its subset $S$,
the {\em Cayley graph} ${\tt Cay}(G, S)$
is a directed graph, which has the vertex set $G$
and has an edge from $g\in G$ to $h\in G$
if and only if there exists an element $k\in S$
such that $gk=h$.
Though we call $S$ the {\em generating set} of
the graph ${\tt Cay}(G, S)$, 
we do not impose  $S$ to generate the group $G$, and hence
${\tt Cay}(G, S)$ may not be connected.
Let $\sigma=(1,3,2)$ be the cyclic permutation of
order $3$ and $\tau=(1,3)$ be the transposition
in the symmetric group ${\mathfrak S}_3$ of degree $3$.
Then each of two sets
\[
 S_1 = \{\sigma, \sigma^{-1}, \tau\},~~
 S_2 = \{\tau, \tau\sigma, {\rm id}\},
\]
generates ${\mathfrak S}_3$.
The Cayley graphs ${\tt Cay}({\mathfrak S}_3, S_1)$ and ${\tt Cay}({\mathfrak S}_3, S_2)$
are depicted in Figure $\ref{fig:pairs}$.
\begin{center}
 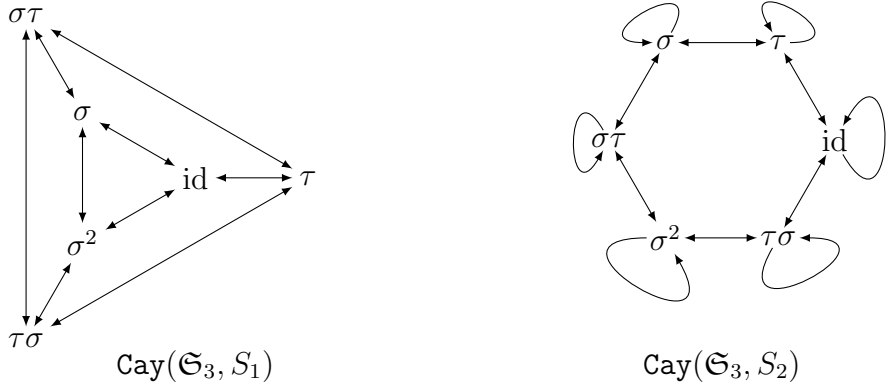
\begin{figure}[H]
  \begin{center}
   \begin{tikzpicture}
    \node[inner sep=2.5](1) at (240:1) {$\sigma^{2}$};
    \node[inner sep=2.5](2) at (120:1) {$\sigma$};
    \node[inner sep=2.5](3) at (0:1) {${\rm id}$};
    \node[inner sep=2.5](4) at (240:2.5) {$\tau\sigma$};
    \node[inner sep=2.5](5) at (120:2.5) {$\sigma\tau$};
    \node[inner sep=2.5](6) at (0:2.5) {$\tau$};
     \draw[<->,>=latex] (1)--(2);
     \draw[<->,>=latex] (2)--(3);
     \draw[<->,>=latex] (3)--(1);
     \draw[<->,>=latex] (4)--(5);
     \draw[<->,>=latex] (5)--(6);
     \draw[<->,>=latex] (6)--(4);
     \draw[<->,>=latex] (1)--(4);
     \draw[<->,>=latex] (2)--(5);
     \draw[<->,>=latex] (3)--(6);
    %\node[inner sep=2.5](1) at (0,0) {$\sigma^{2}$};
    %\node[inner sep=2.5](2) at (2,0) {$\sigma$};
    %\node[inner sep=2.5](3) at (1,1) {${\rm id}$};
    %\node[inner sep=2.5](4) at (0,-2) {$\tau\sigma$};
    %\node[inner sep=2.5](5) at (2,-2) {$\sigma\tau$};
    %\node[inner sep=2.5](6) at (1,-1) {$\tau$};
    % \draw[<->,>=latex] (1)--(2);
    % \draw[<->,>=latex] (2)--(3);
    % \draw[<->,>=latex] (3)--(1);
    % \draw[<->,>=latex] (4)--(5);
    % \draw[<->,>=latex] (5)--(6);
    % \draw[<->,>=latex] (6)--(4);
    % \draw[<->,>=latex] (1)--(4);
    % \draw[<->,>=latex] (2)--(5);
    % \draw[<->,>=latex] (3)--(6);
    \draw (1,-2.5) node {${\tt Cay}({\mathfrak S}_3, S_1)$};

    \begin{scope}[xshift=8cm,yshift=0.5cm]
    \node[inner sep=1](c1) at (0:1.5) {${\rm id}$};
    \node[inner sep=1](c2) at (60:1.5) {$\tau$};
    \node[inner sep=1](c3) at (120:1.5) {$\sigma$};
    \node[inner sep=1](c4) at (180:1.5) {$\sigma\tau$};
    \node[inner sep=1](c5) at (240:1.5) {$\sigma^2$};
    \node[inner sep=1](c6) at (300:1.5) {$\tau\sigma$};
     \draw[<->,>=latex] (c1)--(c2);
     \draw[<->,>=latex] (c2)--(c3);
     \draw[<->,>=latex] (c3)--(c4);
     \draw[<->,>=latex] (c4)--(c5);
     \draw[<->,>=latex] (c5)--(c6);
     \draw[<->,>=latex] (c6)--(c1);
     \path (c1) edge[loop, looseness=10, ->, >=latex, in=60, out=-60] (c1);
     \path (c2) edge[loop, looseness=10, ->, >=latex, in=120, out=-0] (c2);
     \path (c3) edge[loop, looseness=10, ->, >=latex, in=180, out=60] (c3);
     \path (c4) edge[loop, looseness=10, ->, >=latex, in=240, out=120] (c4);
     \path (c5) edge[loop, looseness=10, ->, >=latex, in=300, out=180] (c5);
     \path (c6) edge[loop, looseness=10, ->, >=latex, in=360, out=240] (c6);
    \draw (0,-3.0) node {${\tt Cay}({\mathfrak S}_3, S_2)$};
    \end{scope}
   \end{tikzpicture}
  \end{center}
  \caption{Cayley graphs the symmetric group ${\mathfrak S}_3$ generated by $S_1$ (left) and $S_2$ (right)}
  \label{fig:pairs}
 \end{figure}
\end{center}
The diameter of ${\tt Cay}({\mathfrak S}_3, S_1)$ is $2$ and
that of ${\tt Cay}({\mathfrak S}_3, S_2)$ is $3$, hence
they are non-isomorphic.
We consider the simple random walks on these Cayley graphs. Namely
we consider the two sequences $(\mu^{(0)}_1, \mu^{(1)}_1, \mu^{(2)}_1,\ldots)$ 
and $(\mu^{(0)}_2, \mu^{(1)}_2, \mu^{(2)}_2,\ldots)$
of the probability measures over ${\mathfrak S}_3$,
which are defined as follows,
\[
 \mu_i^{(0)}(g) = \begin{cases}
		  1 & g = {\rm id},\\
		  0 & g \neq {\rm id},
		 \end{cases}
\]
\[
 \mu_i^{(t+1)}(g) = \frac{1}{3}\sum_{k\in S_i} \mu^{(t)}_i(gk^{-1}).
\]
It can be easily checked that these random walks are
ergodic and
have the uniform distribution $U$
as their stationary distributions, that is,
$\lim_{t\to\infty}\mu_i^{(t)}(g)=U(g)=1/6$ for all $g\in {\mathfrak S}_3$.
The speed of the convergence is measured by the total variation
distance ${\rm d}_{TV}(\mu_i^{(t)}, U)$ defined by
\[
 {\rm d}_{TV}(\mu_i^{(t)}, U) = 
 \frac{1}{2}\sum_{g\in {\mathfrak S}_3}\left|\mu_i^{(t)}(g)-U(g)\right|.
\]
Even though they have different diameters,
the distributions of the simple random walks on them approach the
uniform distribution in exactly the same way.
That is, 
\[
{\rm d}_{TV}(\mu_1^{(t)}, U) = {\rm d}_{TV}(\mu_2^{(t)}, U),
\]
for $ t=0,1,2,\ldots$.
This can be observed in Table $\ref{tab:mus}$, where
the values of $\mu_1^{(t)}(g)$ and $\mu_2^{(t)}(g)$
for $0\leq t\leq 5$ and $g\in {\mathfrak S}_3$ are shown.
\begin{center}
 \begin{table}[H]
  \begin{center}
\[
\begin{array}{c|cccccc}
t\backslash g & {\rm id} & \sigma & \sigma^2 & \tau & \sigma\tau & \tau\sigma \\
\hline 
0 & 1 &   0  &  0  &  0 &   0 &   0\\
1 & 0 &   1/3  &  1/3  &  1/3 &   0 &   0\\
2 & 3/9 &   1/9  &  1/9  &  0 &   2/9 &   2/9\\
3 & 2/27 &   6/27  &  6/27  &  7/27 &   3/27 &   3/27\\
4 &19/81 &  11/81  & 11/81  &  8/81 &  16/81 &  16/81\\
5 & 30/243 &  46/243 &  46/243 &  51/243 &  35/243 &  35/243
\end{array}
\]
\[
\begin{array}{c|cccccc}
t\backslash g & {\rm id} & \sigma & \sigma^2 & \tau & \sigma\tau & \tau\sigma \\
\hline 
0 &  1     &  0      & 0       &   0     & 0       &  0\\
1 &  1/3   &  0      & 0       &   1/3   & 0       &  1/3\\
2 &  3/9   &  1/9    & 1/9     &   2/9   & 0       &  2/9\\
3 &  7/27  &  3/27   & 3/27    &   6/27  & 2/27    &  6/27\\
4 & 19/81  & 11/81   &11/81    &  16/81  & 8/81    & 16/81\\
5 & 51/243 &  35/243 &  35/243 &  46/243 &  30/243 &  46/243
\end{array}
\]
  \end{center}
  \caption{Tables of $\mu_1^{(t)}$ (top) and $\mu_2^{(t)}$ (bottom)}
  \label{tab:mus}
 \end{table}
\end{center}
By comparing the two tables, we find that
$\mu_1^{(t)}$ and $\mu_2^{(t)}$ are nearly the same.
Let $\varphi_0$ and $\varphi_1$ be two permutations 
on ${\mathfrak S}_3$ defined by
\begin{equation}
 \label{eq:phi0phi1fortheta10}
 \varphi_0 =
 \begin{pmatrix}
  {\rm id} & \sigma & \sigma^2 & \tau  & \sigma\tau & \tau\sigma \\
  {\rm id} & \sigma^2 & \sigma & \sigma\tau & \tau & \tau\sigma 
 \end{pmatrix},~~~
 \varphi_1 =
 \begin{pmatrix}
  {\rm id} & \sigma & \sigma^2 & \tau  & \sigma\tau & \tau\sigma \\
  \tau & \tau\sigma & \sigma\tau & \sigma  & {\rm id} & \sigma^2
 \end{pmatrix}.
\end{equation}
Then, as we will prove later, we have
\[
 \mu_1^{(t)}(\varphi_{[t]}(g)) =
 \mu_2^{(t)}(g),
\]
where $[t]$ is an integer in $\{0,1\}$ obtained by taking modulo $2$
of $t$.
The characteristic polynomials of 
the adjacency matrices of ${\tt Cay}({\mathfrak S}_3, S_1)$
and ${\tt Cay}({\mathfrak S}_3, S_2)$ are
\[
P_1(x)=(x - 3)(x - 1)x^2(x + 2)^2,~~\mbox{ and }~~
P_2(x)=(x - 3)(x - 2)^2x^2(x + 1),
\]
from which we can observe
that 
they have common factor $P_Z(x)=(x-3)x^2$ and
\[
 \frac{P_1(x)}{P_Z(x)} = -\frac{P_2(-x)}{P_Z(-x)}.
\]
The spectral sets of the two graphs contain the
same subset $\{3,0,0\}$, whose complements in two spectral sets
$\{1,-2,-2\}$ and $\{-1,2,2\}$ have the opposite signs.
%This correspondence holds in more general setting:
%Let $p$ and $q$ be two indeterminates and
%$p_i: S_i\to {\mathbb Z}[p,q]$ be defined by
%\[
% p_1({\sigma})=p_2({ \tau})=p,~~
% p_1({\sigma^{-1}})=p_2({ \tau\sigma})=q,~~
% p_1(\tau)=p_2({\rm id})=1-p-q.
%\]
%Then $p_i$ defines the $p$-random walk
%on ${\rm Cay}({\mathfrak S}_3, S_i)$
%shown on Figure $\ref{fig:p-walk}$.

A larger example of such pairs of the Cayley graphs is
given by ${\tt Cay}(A_5, S_1)$ and ${\tt Cay}(A_5, S_2)$, where
$A_5$ is the alternating group of degree $5$ and
 \[
  S_1 = \{(1,2,3,4,5), (1,2,3,4,5)^{-1}, (1,2)(3,4)\},~~~
  S_2 = \{(1, 2)(3, 5), (1, 2)(4, 5), (1, 3)(4, 5)\}.
 \]
 See Figure $\ref{fig:anotherpair}$.
 The Cayley graph ${\tt Cay}(A_5,S_1)$ is of diameter $6$, while ${\tt Cay}(A_5, S_2)$ is
 of diameter $9$.
\begin{center}
 \begin{figure}[H]
  \begin{center}
   \begin{tikzpicture}
    \draw (0,0) node {\includegraphics[bb=0 0 587 594,clip,width=4cm]{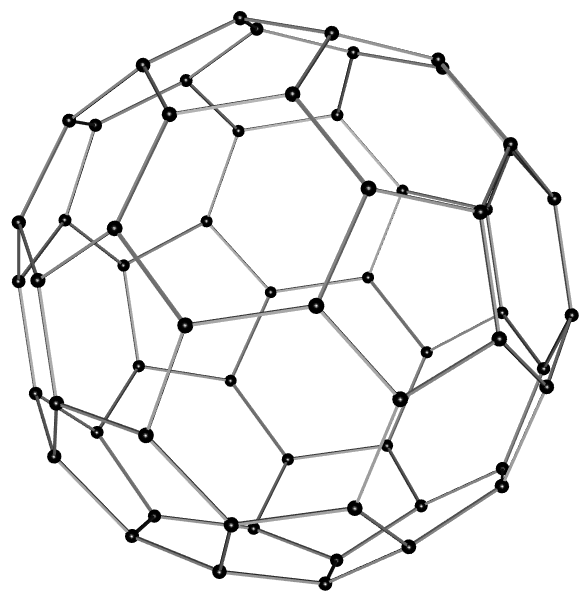}};
    \draw (0,-2.5) node {${\tt Cay}(A_5, S_1)$};
    \draw (5,0) node {\includegraphics[bb=0 0 587 594,clip,width=4cm]{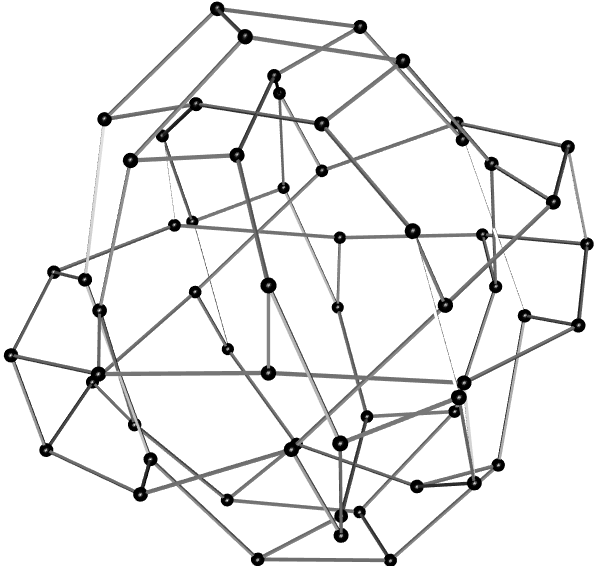}};
    \draw (5,-2.5) node {${\tt Cay}(A_5, S_2)$};
   \end{tikzpicture}
  \end{center}
  \caption{Another example of the Cayley graphs simple random walks on which have the same total variation distance
  from the uniform distribution.  All edges are bi-directed.}
  \label{fig:anotherpair}
 \end{figure}
\end{center}

Let $P_i(x)$ be the characteristic polynomial of the 
adjacency matrix of the Cayley graph ${\tt Cay}(A_5, S_i)$ for $i=1,2$.
Then $P_1(x)$ and $P_2(x)$ have the common factor
\[
 P_Z(x)=
(        x - 3)
(        x - 1)^9
(        x + 2)^4
(  x^2 - x - 3)^5
(x^2 + 3x + 1)^3,
\]
whose degree is $|A_5|/2=30$, and
\[
 \frac{P_1(x)}{P_Z(x)} =
 \frac{P_2(-x)}{P_Z(-x)}
 =
(                  x^2 + x - 4)^ 4
(                  x^2 + x - 1)^ 5
(x^4 - 3x^3 - 2x^2 + 7x + 1)^3.
\]
Thus, as multiset, the spectral set of ${\mathtt {Cay}}(A_5,S_i)$
is decomposed into a disjoint union  of two subsets $A$ and $B_i$
of the equal size $(|A|=|B_i|=|A_5|/2)$, which
satisfies $B_2 = -B_1 = \{-\lambda\,|\,\lambda\in B_1\}$.

Our main aim in this paper is to 
construct an infinite family of the
triples $(G,S_1, S_2)$ consisting of a group $G$ and a pair 
$(S_1, S_2)$ of distinct subsets of $G$
%with the  properties similar to those of the examples shown above.
with the  properties ${\bf \ref{cond:nonisomorphic}}, {\bf \ref{cond:RW}},$ and ${\bf \ref{cond:spectral}}$.
Our construction uses the sliding block puzzles
defined on a certain family of graphs, which can be considered
as a generalization of the one studied in $\cite{hanaoka20235}$.

The outline of the paper is as follows.
In Section $2$, we show a construction of an
infinite family of triples.
In Section $3$, we show the triples constructed
in Section $2$ have the property ${\bf \ref{cond:RW}}$.
In Section $4$, by applying the theory of the zeta
functions of finite graph covering, 
we show the triples have the properties ${\bf \ref{cond:nonisomorphic}}$
and ${\bf \ref{cond:spectral}}$.

\section{Construction of pairs}
\label{sec:construction}
%Our construction of the pairs $(S_1, S_2)$
%of distinct subsets of a group $G$
%with the properties ${\bf \ref{cond:nonisomorphic}}$,
%${\bf \ref{cond:RW}}$, and ${\bf \ref{cond:spectral}}$
%uses the so-called {\em sliding block puzzles}
%defined on the {\em theta graphs}.

Let $\Gamma$ be a finite undirected simple graph equipped with the vertex set $V$ 
of cardinality $n+1$ and 
the edge set $E$. 
A {\em position} of the {\em sliding block puzzle}
defined on $\Gamma$ is a bijection $f: V\to \{1,2,\ldots, n, 0\}$.
We say $v\in V$ is {\em blank} or {\em unoccupied} in
a position $f$ if $f(v)=0$, and hence, 
the vertex $f^{-1}(0)$ is blank.
A position $f$ is transformed into another position $g$  by a
{\em move} if there exist two mutually
adjacent vertices $v,w$ in $\Gamma$, such that, $v$ or $w$
is blank in $f$, and
\[
 g = f\circ (v,w),
\]
where $(v,w)$ denotes the transposition of $v$ and $w$.
Since $\Gamma$ is undirected and simple,
every path $p$ on $\Gamma$ can be represented as a sequence
$(v_0,v_1,\ldots,v_l)$ of the vertices. 
We define the permutation
$\sigma_p$ of the vertices by
\[
 \sigma_p = (v_0,v_1)(v_1,v_2)\cdots (v_{l-1}, v_{l}).
\]
If $g=f\circ\sigma_p$ and $v_0$ is blank in $f$, then
$v_l$ is blank in $g$.
That is, $\sigma_p$ sends the blank from $v_0$ to $v_l$
through the path $p$.

Let ${\rm puz}(\Gamma)$ be the graph whose vertex set consists of
the positions of the puzzle.
Two vertices (or positions) $f$ and $g$ of ${\rm puz}(\Gamma)$
are connected by an edge $\{f,g\}$
if and only if there is a move transforming $f$ into $g$.
\begin{center}
 \begin{figure}[H]
  \begin{center}
   \begin{tikzpicture}
    \draw (0:1) -- (60:1) -- (120:1) -- (180:1) -- (240:1) -- (300:1) -- cycle;
    \draw (0:1) -- (180:1);
    \draw[fill=black, stroke=black] (0,0) circle (0.05);
    \draw[fill=black, stroke=black] (0:1) circle (0.05);
    \draw[fill=black, stroke=black] (60:1) circle (0.05);
    \draw[fill=black, stroke=black] (120:1) circle (0.05);
    \draw[fill=black, stroke=black] (180:1) circle (0.05);
    \draw[fill=black, stroke=black] (240:1) circle (0.05);
    \draw[fill=black, stroke=black] (300:1) circle (0.05);
   \end{tikzpicture}
   \caption{The graph $\theta_0$}\label{fig:thetazero}
  \end{center}
 \end{figure}
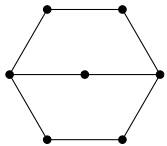
\end{center}
Wilson \cite{wilson1974graph} shows the following fundamental theorem for the sliding
block puzzles:
\begin{thm}\label{thm:wilson}{\rm (\cite[Theorem 1]{wilson1974graph})}
Let $\Gamma$ be a finite simple connected graph other
than a polygon or the graph $\theta_0$ shown in Figure $\ref{fig:thetazero}$.
Then ${\rm puz}(\Gamma)$ is connected unless $\Gamma$ is bipartite,
in which case ${\rm puz}(\Gamma)$ has exactly two components.
In this latter case, positions $f,g$ on $\Gamma$ having
blank vertices at even (resp. odd) distance in $\Gamma$
are in the same component of ${\rm puz}(\Gamma)$ if and only if
there exists an even (resp. odd) permutation $\sigma$
of $V$ such that $f\circ\sigma = g$.
\end{thm}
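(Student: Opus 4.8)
The plan is to recast the puzzle in group-theoretic terms and reduce the connectivity of ${\rm puz}(\Gamma)$ to the order of a ``puzzle group.'' Fix a base vertex $v_0\in V$ and view the $n$ non-blank vertices as carrying labels. First I would observe that every move is a transposition of the blank with an adjacent vertex, so any sequence of moves that returns the blank to $v_0$ induces a permutation of the remaining $n$ vertices; collecting all such permutations yields a subgroup $W\le{\mathfrak S}_n$ (concatenation of closed walks at $v_0$ corresponds to composition, and retracing a walk gives inverses). Since $\Gamma$ is connected, the blank can be routed to any vertex, so from a single starting position the reachable set has size $(n+1)\,|W|$, and the number of components of ${\rm puz}(\Gamma)$ equals $(n+1)!/\bigl((n+1)\,|W|\bigr)=[{\mathfrak S}_n:W]$. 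The theorem thus reduces to showing $W={\mathfrak S}_n$ in general and $W={\mathfrak A}_n$ exactly when $\Gamma$ is bipartite, together with the stated coset description.

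The parity half is comparatively easy. Moving the blank along a closed walk of length $\ell$ at $v_0$ realizes a product of $\ell$ vertex-transpositions, hence a permutation of sign $(-1)^{\ell}$, using the identity for $\sigma_p$ recorded before the theorem. If $\Gamma$ is bipartite, every closed walk has even length, so $W\le{\mathfrak A}_n$; conversely an odd closed walk produces an odd permutation, so $W\not\le{\mathfrak A}_n$ when $\Gamma$ is non-bipartite. In the bipartite case this also delivers the final clause: since any walk transporting the blank between two sites at $\Gamma$-distance $d$ has length $\equiv d\pmod 2$, it contributes a fixed parity $(-1)^{d}$, while the tile rearrangement ranges over all of ${\mathfrak A}_n$; hence $f$ and $g$ with blanks at even (resp.\ odd) distance lie in the same component if and only if the permutation $\sigma$ with $f\circ\sigma=g$ is even (resp.\ odd).

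The substantive half is to prove $W\supseteq{\mathfrak A}_n$ whenever $\Gamma$ is neither a polygon nor $\theta_0$. I would first reduce to the $2$-connected case by induction on cut vertices via the block decomposition, since moves confined to distinct blocks can be composed once the blank is routed through the connecting cut vertex. For a $2$-connected graph that is not a cycle, an ear-decomposition argument produces a theta-subgraph, namely two vertices joined by three internally disjoint paths. Routing the blank around the two cycles formed by pairs of these paths yields two cyclic permutations of tiles that overlap along the shared path; taking a suitable commutator-type product makes the common portion cancel and leaves a single $3$-cycle. Because $W$ is transitive and, by $2$-connectivity, primitive, Jordan's theorem then upgrades ``contains a $3$-cycle'' to $W\supseteq{\mathfrak A}_n$; combined with the parity computation this forces $W={\mathfrak A}_n$ in the bipartite case and $W={\mathfrak S}_n$ otherwise.

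The main obstacle is exactly this generation step, and specifically the two genuine exceptions. On a polygon the blank can only rotate the tiles cyclically, so $W$ is cyclic and far smaller than ${\mathfrak A}_n$; such graphs must be excluded by hypothesis. The graph $\theta_0$ is the delicate case: the theta-subgraph construction fails to produce a $3$-cycle there, and a separate explicit computation is required to identify its puzzle group as the exceptional transitive subgroup of ${\mathfrak S}_6$ of index $6$ and order $120$, which is precisely why $\theta_0$ appears as an isolated exception. Verifying that \emph{every} other $2$-connected non-cycle graph does deliver a $3$-cycle, and that the exceptional behavior is confined to $\theta_0$, is the technical heart of the argument and the step I expect to be hardest to make rigorous.
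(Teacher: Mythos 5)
You should know at the outset that the paper contains no proof of this statement: it is quoted (as Theorem~1 of \cite{wilson1974graph}) and used as a black box, so the only meaningful benchmark is Wilson's original argument --- and your sketch does reconstruct its broad architecture: the puzzle group $W$ at a base vertex with orbit size $(n+1)|W|$ and component count $[{\mathfrak S}_n : W]$, the sign $(-1)^{\ell}$ of a closed blank-walk of length $\ell$ (note, correctly, that for a closed walk $\sigma_p$ fixes $v_0$, so the restriction to the $n$ tiles is well defined), the bipartite coset description once $W=A_n$ is known, theta subgraphs as the engine producing a $3$-cycle, and $\theta_0$ as the isolated exception whose group has order $120$. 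All of that bookkeeping is correct, and you are right that the generation step $W\supseteq A_n$ is the technical heart.

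There are, however, two genuine gaps. First, your appeal to Jordan's theorem rests on the bare assertion that $W$ is primitive ``by $2$-connectivity''; primitivity of the puzzle group is not an immediate consequence of $2$-connectivity of $\Gamma$ and needs its own proof --- transitivity plus a $3$-cycle alone is insufficient (an imprimitive transitive group preserving two blocks of size $3$ in ${\mathfrak S}_6$ can contain a $3$-cycle without containing $A_6$), while $\theta_0$ shows the subtlety from the other side: its group ${\rm PGL}(2,5)\cong {\mathfrak S}_5$ acting on $6$ points \emph{is} primitive and transitive, and what fails there is only the existence of a $3$-cycle. Wilson avoids this by explicit analysis of the theta graphs (your ``commutator-type product'' likewise needs a real computation, since whether it leaves exactly a $3$-cycle depends on the three path lengths; $\theta_0$, which in this paper's notation is $\theta_{2,1}$ with path lengths $3,3,2$ --- precisely the excluded case $(a,b)=(2,1)$ of Theorem~\ref{thm:existence} --- is where it breaks) followed by induction on ear additions. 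Second, and more seriously for your first reduction: the statement as transcribed in this paper omits Wilson's nonseparability ($2$-connectivity) hypothesis, and your block-decomposition step cannot repair that, because the claim is simply false for separable graphs --- on the path with four vertices, ${\rm puz}$ has six components rather than two, since the linear order of the tiles is invariant under all moves. So you must restore the $2$-connected hypothesis (as in Wilson's original theorem) rather than attempt to reduce general connected graphs to blocks; with that hypothesis restored, your outline matches Wilson's proof, but the primitivity (or, equivalently, enough $3$-cycles to generate) step remains the part you have not actually supplied.
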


\begin{dfn}
Let $X$ be an undirected simple graph.
Let $v$ be a vertex of degree two,
$\{v_1, v_2\}$ be the neighboring vertex of $v$,
and
$e_1$ and $e_2$ be the two edges incident to $v$.
By replacing $e_1$ and $e_2$
with a single edge $e$ connecting $v_1$ and $v_2$
vertices neighboring $v$
and removing $v$ from $X$,
we obtain a smaller graph.
By repeatedly applying this procedure,
we obtain a graph $\widetilde{X}$
without vertices of degree two.
We call $\widetilde{X}$ the {\em path contraction}
of $X$.
\end{dfn}

For a positive integer $a$ and
a non-negative integer $b$, 
the {\em theta graph} $\theta_{a,b}$ 
is defined as follows.
The theta graph $\theta_{a,b}$ has the vertex set $V=\{v_0, v_1, \ldots, v_n\}$
where $n=2a+b+1$,
%Since $\theta_{a,b}$ is an undirected simple graph,
%an edge can be represented as a pair of two vertices.
and the edge set $E$ defined by
\[
 E  = \left\{\{v_i, v_{i+1}\}\,\middle|\, i\in\{0,1,\ldots,n-2\}\backslash\{2a+1\}
 \right\}
 \cup 
 \left\{
 \{v_0, v_{2a+1}\},
 \{v_0,v_{2a+2}\},
 \{v_{2a+b+1}, v_{a+1}\}
 \right\},
\]
where $\{v,w\}$ denotes the unique edge
connecting $v$ and $w$ in $\theta_{a,b}$.
Figure $\ref{fig:theta23}$ shows $\theta_{2,3}$.
\begin{center}
 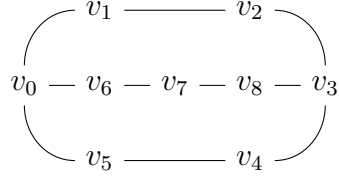
\begin{figure}[H]
  \begin{center}
   \begin{tikzpicture}
    \node (v0) at (0,0) {$v_0$};
    \node (v6) at (1,0) {$v_6$};
    \node (v7) at (2,0) {$v_7$};
    \node (v8) at (3,0) {$v_8$};
    \node (v3) at (4,0) {$v_3$};
    \node (v1) at (1.,1) {$v_1$};
    \node (v2) at (3,1) {$v_2$};
    \node (v5) at (1.,-1) {$v_5$};
    \node (v4) at (3,-1) {$v_4$};
    \draw[out=90,in=180] (v0) to (v1);
    \draw (v1) to (v2);
    \draw[out=0,in=90] (v2) to (v3);
    \draw[out=-90,in=0] (v3) to (v4);
    \draw (v4) to (v5);
    \draw[out=180,in=-90] (v5) to (v0);
    \draw (v0) to (v6);
    \draw (v6) to (v7);
    \draw (v7) to (v8);
    \draw (v8) to (v3);
   \end{tikzpicture}
   \caption{$\theta_{2,3}$}
   \label{fig:theta23}
  \end{center}
 \end{figure}
\end{center}
Let $\rho: V\to V$ be the bijection
defined by
\[
 \rho(v_i) = \begin{cases}
		v_{(i+a+1){\rm mod }(2a+2)} & i = 0,1,\ldots, 2a+1,\\
		v_{4a+b+3-i} & i = 2a+2, 2a+3, \ldots, 2a+b+1.
	       \end{cases}
\]
Then $\rho$ induces a graph automorphism
of $\theta_{a,b}$, which can be considered as the
$180^\circ$ rotation.
Let $\psi: V\to V$ be the bijection defined by
\[
 \psi(v_i) = \begin{cases}
	      v_{a+1-i} & i = 0,1,\ldots, a+1,\\
	      v_{3a+3-i} & i = a+2,a+3,\ldots,2a+1,\\
	      v_{4a+b+3-i} & i = 2a+2, 2a+3, \ldots, 2a+b+1.
	     \end{cases}
\]
Then $\psi$ induces another graph automorphism of $\theta_{a,b}$, which can be considered as a
vertical flip.
These two automorphisms $\rho$ and $\psi$ generate
a subgroup of the automorphism group ${\rm Aut}(\theta_{a,b})$ of $\theta_{a,b}$,
which is isomorphic to the Klein four-group.
The image of each vertex $v_i$ in $\theta_{2,3}$ of $\rho$ (resp. $\psi$)
are shown in the left (resp. right) side of Figure $\ref{fig:rhopsi}$.

\begin{center}
 \begin{figure}[H]
  \begin{center}
   \begin{tikzpicture}
    \node (v0) at (0,0) {$v_3$};
    \node (v6) at (1,0) {$v_8$};
    \node (v7) at (2,0) {$v_7$};
    \node (v8) at (3,0) {$v_6$};
    \node (v3) at (4,0) {$v_0$};
    \node (v1) at (1.,1) {$v_2$};
    \node (v2) at (3,1) {$v_1$};
    \node (v5) at (1.,-1) {$v_4$};
    \node (v4) at (3,-1) {$v_5$};
    \draw[out=90,in=180] (v0) to (v1);
    \draw (v1) to (v2);
    \draw[out=0,in=90] (v2) to (v3);
    \draw[out=-90,in=0] (v3) to (v4);
    \draw (v4) to (v5);
    \draw[out=180,in=-90] (v5) to (v0);
    \draw (v0) to (v6);
    \draw (v6) to (v7);
    \draw (v7) to (v8);
    \draw (v8) to (v3);
    \begin{scope}[xshift=-6cm]
    \node (v0) at (0,0) {$v_3$};
    \node (v6) at (1,0) {$v_8$};
    \node (v7) at (2,0) {$v_7$};
    \node (v8) at (3,0) {$v_6$};
    \node (v3) at (4,0) {$v_0$};
    \node (v1) at (1.,1) {$v_4$};
    \node (v2) at (3,1) {$v_5$};
    \node (v5) at (1.,-1) {$v_2$};
    \node (v4) at (3,-1) {$v_1$};
    \draw[out=90,in=180] (v0) to (v1);
    \draw (v1) to (v2);
    \draw[out=0,in=90] (v2) to (v3);
    \draw[out=-90,in=0] (v3) to (v4);
    \draw (v4) to (v5);
    \draw[out=180,in=-90] (v5) to (v0);
    \draw (v0) to (v6);
    \draw (v6) to (v7);
    \draw (v7) to (v8);
    \draw (v8) to (v3);
    \end{scope}
   \end{tikzpicture}
   \caption{Images of $\rho$ (left) and $\psi$ (right) for $\theta_{2,3}$}
   \label{fig:rhopsi}
  \end{center}
 \end{figure}
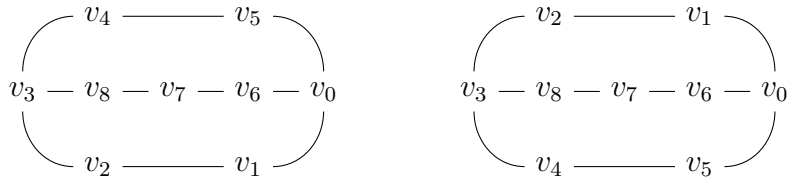
\end{center}

There exist three simple paths $p_1, p_2, p_3$
from $v_0$ and $v_{a+1}$, defined by
\[
 p_1 = (v_0, v_1, \ldots, v_{a+1}),~~
 p_2 = (v_0, v_{2a+2}, \ldots, v_{2a+b+1}, v_{a+1}),~~
 p_3 = (v_0, v_{2a+1}, \ldots, v_{a+1}).
\]
Each path $p_k$ defines a vertex permutation $\sigma_{p_k}$ of $\theta_{a,b}$.
%Note that $\sigma_{p_k}$ and $\rho$ commute, that is,
%$\sigma_{p_k}\rho = \rho\sigma_{p_k}$.
The vertex permutation $\sigma_{p_k}\rho$ fixes the
vertex $v_0$, which gives the permutation $\sigma_k \in {\mathfrak S}_n$
such that
\begin{equation}\label{eq:defsigmak}
 \sigma_{p_k}\rho(v_i)=v_{\sigma_k(i)}, 
\end{equation}
for $i=1,2,\ldots,n$.
We define $S_1$ by
\begin{equation}
\label{eq:S1}
 S_1  = \left\{\sigma_1,\sigma_2,\sigma_3\right\}.
\end{equation}
In the same manner, we define $\tau_k\in {\mathfrak S}_3$
by using $\psi$ instead of $\rho$, that is,
\[
 \sigma_{p_k}\psi(v_i)=v_{\tau_k(i)},
\]
for $k=1,2,3$, and we define $S_2$ by
\begin{equation}\label{eq:S2}
 S_2 = \left\{\tau_1,\tau_2,\tau_3\right\}.
\end{equation}
% Then, we obtain the following lemma
%\begin{lem}
% \[
%  \sigma_3 = \sigma_1^{-1},~~
% \sigma_2^{2} = {\rm id},~~
% \tau_1^2 = \tau_2^2 = \tau_3^2 = {\rm id}.
% \]
%\end{lem}
%\begin{proof}
%\end{proof}

Let $S$ be a subset of a finite group $G$.
The {\em Cayley graph} ${\tt Cay}(G,S)$ of the group $G$
with the generating set $S$ is an directed graph,
whose vertex set is $G$ and there exists an edge from $g$ to $h$ in $G$
if and only if there exists an element $k\in S$ such that $h = gk$.
For an edge $e$ of ${\tt Cay}(G,S)$, the starting vertex of $e$
is denoted $o(e)$, and the terminal vertex is denoted $t(e)$.

We then consider
the Cayley graph of the group $G\times C_2$,
where
$C_2={\mathbb Z}/2{\mathbb Z}=\{0,1\}$ is
the cyclic group of order $2$.
For the sake of simplicity of the notation,
we write
\[
 X(G,S)={\tt Cay}(G,S),~~~Y(G,S)={\tt Cay}(G\times C_2, S\times\{1\}).
\]
Then the Cayley graph
$Y(G, S)$ is a bipartite graph, that is,
the vertex set $G\times C_2$ of $Y(G,S)$
is divided into two disjoint
parts $B=G\times\{0\}$ and $W=G\times \{1\}$
and every edge is from $B$ to $W$ or
$W$ to $B$.

Let $f_0$ be the {\em initial} position
which is defined by $f_0(v_i)=i$
for $i\in\{0,1,\ldots,n\}$.
We are interested in the connected component
${\rm puz}_0(\theta_{a,b})$ of 
${\rm puz}(\theta_{a,b})$, which contains
the initial position $f_0$.
It is clear that the maps
\[
 f\mapsto f\rho,~~~
 f\mapsto f\psi,
\]
can be considered as two automorphisms of $\widetilde{\rm puz}(\theta_{a,b})$,
since they are the $180^\circ$ rotation and the vertical flip
of $\theta_{a,b}$ respectively.
Thus, we can consider the group
$K=\{{\rm id}, \rho, \psi, \rho\psi\}$
acts also on $\widetilde{\rm puz}(\theta_{a,b})$ as
a subgroup of the automorphisms group ${\rm Aut}(\widetilde{\rm puz}(\theta_{a,b}))$.
When $a+b$ is an even integer, by Theorem $\ref{thm:wilson}$,
${\rm puz}(\theta_{a,b})$ has two connected components.
The following lemma shows the conditions
for each of $\rho$ and $\psi$ to be 
contained in ${\rm Aut}(\widetilde{\rm puz}_0(\theta_{a,b}))$.

\begin{lem}
 \label{lem:rhopsiAuto}
 If $a\equiv 0\pmod{2}$ and $b\equiv 0\pmod{4}$, then
 \[
 \rho, \psi \in
  {\rm Aut}\left(\widetilde{\rm puz}_0(\theta_{a,b})\right),
 \mbox{~~ and ~~~}
 S_1, S_2 \subset A_n.
 \]
 If $a\equiv 0\pmod{2}$ and $b\equiv 2\pmod{4}$, then
 \[
 \rho, \psi \not\in
  {\rm Aut}\left(\widetilde{\rm puz}_0(\theta_{a,b})\right),
 \mbox{~~ and ~~~}
 S_1, S_2 \subset {\mathfrak S}_n\backslash A_n.
 \]
 If $a\equiv 1\pmod{2}$ and $b\equiv 1\pmod{4}$, then
 \[
 \rho\in
  {\rm Aut}\left(\widetilde{\rm puz}_0(\theta_{a,b})\right),~~
 \psi\not\in
  {\rm Aut}\left(\widetilde{\rm puz}_0(\theta_{a,b})\right),~~
 S_1 \subset A_n,
 \mbox{~ and ~}
 S_2 \subset {\mathfrak S}_n\backslash A_n.
 \]
 If $a\equiv 1\pmod{2}$ and $b\equiv 3\pmod{4}$, then
 \[
 \rho\not\in
  {\rm Aut}\left(\widetilde{\rm puz}_0(\theta_{a,b})\right),~~
 \psi\in
  {\rm Aut}\left(\widetilde{\rm puz}_0(\theta_{a,b})\right),~~
 S_1 \subset {\mathfrak S}_n\backslash A_n,
 \mbox{~ and ~}
 S_2 \subset A_n.
 \]
\end{lem}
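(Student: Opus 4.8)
The plan is to reduce all four assertions to two sign computations together with a single application of Wilson's theorem (Theorem \ref{thm:wilson}). The one combinatorial quantity that controls everything is the parity of $\binom{b}{2}=b(b-1)/2$, which is even exactly when $b\equiv 0,1\pmod 4$ and odd exactly when $b\equiv 2,3\pmod 4$. Note first that in each of the four cases $a$ and $b$ have the same parity, so $a+b$ is even, $\theta_{a,b}$ is bipartite, and (being neither a polygon nor $\theta_0$, which is the excluded graph $\theta_{2,1}$ of opposite parity type) Theorem \ref{thm:wilson} applies and $\mathrm{puz}(\theta_{a,b})$ has exactly two components.

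First I reduce membership to a component question. The map $\hat\rho\colon f\mapsto f\rho$ is an automorphism of $\mathrm{puz}(\theta_{a,b})$ that descends to $\widetilde{\mathrm{puz}}(\theta_{a,b})$; since path contraction neither merges nor splits components, $\rho\in\mathrm{Aut}(\widetilde{\mathrm{puz}}_0(\theta_{a,b}))$ holds iff $\hat\rho$ carries the component $\mathrm{puz}_0$ to itself, i.e. iff $f_0$ and $f_0\rho$ lie in the same component, and likewise for $\psi$. The blank of $f_0$ is at $v_0$, while the blank of $f_0\rho$ is at $\rho^{-1}(v_0)=v_{a+1}$, and also $\psi^{-1}(v_0)=v_{a+1}$; so in both cases the two blanks are at distance $d=d(v_0,v_{a+1})$. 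Because the three $v_0$–$v_{a+1}$ paths have lengths $a+1,\,b+1,\,a+1$, all of the same parity, bipartiteness gives $d\equiv a+1\pmod 2$. The unique permutation $\sigma$ with $f_0\sigma=f_0\rho$ is $\sigma=\rho$ (resp. $\sigma=\psi$), so the bipartite half of Theorem \ref{thm:wilson} reads: $f_0,f_0\rho$ are in the same component iff $\operatorname{sgn}(\rho)=(-1)^{d}=(-1)^{a+1}$, and similarly for $\psi$.

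Next I compute the two signs by a block decomposition. The rotation $\rho$ acts on $\{v_0,\dots,v_{2a+1}\}$ as the antipodal map of a $(2a+2)$-cycle (a product of $a+1$ transpositions) and on $\{v_{2a+2},\dots,v_{2a+b+1}\}$ as the reversal of $b$ letters, so
\[
\operatorname{sgn}(\rho)=(-1)^{a+1}(-1)^{b(b-1)/2}.
\]
The flip $\psi$ is a product of three reversals, on $a+2$, $a$ and $b$ letters; since $\binom{a+2}{2}+\binom{a}{2}=a^2+a+1$ is odd,
\[
\operatorname{sgn}(\psi)=-(-1)^{b(b-1)/2}.
\]
Comparing each with $(-1)^{a+1}$ gives: $\rho\in\mathrm{Aut}(\widetilde{\mathrm{puz}}_0)$ iff $b(b-1)/2$ is even, i.e. $b\equiv 0,1\pmod 4$; and $\psi\in\mathrm{Aut}(\widetilde{\mathrm{puz}}_0)$ iff $b(b-1)/2\equiv a\pmod 2$.

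For the generating sets, each $\sigma_k$ is the restriction to $\{1,\dots,n\}$ of $\sigma_{p_k}\rho$, which fixes $v_0$, so $\operatorname{sgn}(\sigma_k)=\operatorname{sgn}(\sigma_{p_k})\operatorname{sgn}(\rho)$; as $\sigma_{p_k}$ is an $(\ell_k+1)$-cycle with $\ell_k\in\{a+1,b+1\}\equiv a+1\pmod 2$, we have $\operatorname{sgn}(\sigma_{p_k})=(-1)^{a+1}$, whence $\operatorname{sgn}(\sigma_k)=(-1)^{b(b-1)/2}$ and $S_1\subset A_n$ iff $b(b-1)/2$ is even — precisely the condition for $\rho$. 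The identical computation with $\psi$ yields $\operatorname{sgn}(\tau_k)=(-1)^{a+b(b-1)/2}$, so $S_2\subset A_n$ iff $a\equiv b(b-1)/2\pmod 2$ — precisely the condition for $\psi$. Substituting the four residue classes (using that $b(b-1)/2$ is even iff $b\equiv 0,1\pmod 4$) then reproduces the four displayed cases line by line. The only delicate point, and the place to be careful, is the bipartite half of Wilson's theorem: correctly matching the three ingredients — membership in the component $\mathrm{puz}_0$, the sign of the unique connecting permutation, and the parity of the distance between the two blank vertices — with consistent conventions. Once that is pinned down, the remaining sign computations are routine block-reversal bookkeeping.
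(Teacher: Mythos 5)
Your proof is correct and takes essentially the same approach as the paper: both reduce $\rho,\psi\in{\rm Aut}\left(\widetilde{\rm puz}_0(\theta_{a,b})\right)$ to Wilson's bipartite criterion, comparing ${\rm sgn}(\rho)$ and ${\rm sgn}(\psi)$ with the parity of the distance from $v_0$ to $v_{a+1}$, and both deduce the $S_i\subset A_n$ assertions from ${\rm sgn}(\sigma_{p_k}){\rm sgn}(\rho)$ and ${\rm sgn}(\sigma_{p_k}){\rm sgn}(\psi)$. The differences are only presentational: you compute the signs uniformly via reversal parities $(-1)^{b(b-1)/2}$ (equivalent to the paper's count of $a+1+\lfloor b/2\rfloor$, resp. $1+2\lfloor a/2\rfloor+\lfloor b/2\rfloor$, disjoint transpositions) and make explicit the reduction to whether $f_0$ and $f_0\rho$ lie in the same component, which the paper leaves implicit while treating one case in detail and dispatching the rest ``in the same manner.''
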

\begin{proof}
 First note that $\rho$ can be expressed as
 \[
 %\rho = (v_0, v_{a+1})(v_1,v_{a+2})\cdots(v_a, v_{2a+1})
 %(v_{2a+2},v_{n})(v_{2a+3},v_{n-1})\cdots (v_{2a+\lfloor b/2\rfloor}, v_{n-\lfloor b/2\rfloor+1})
 \rho = \prod_{i=0}^a (v_i, v_{a+1+i})
 \prod_{j=1}^{\lfloor b/2\rfloor} (v_{2a+1+j}, v_{n+1-j})
 \]
 which is the product of $a+1+\lfloor b/2\rfloor$ disjoint
 transpositions of the vertices of $\theta_{a,b}$,
 and $\psi$ can be expressed as
 \[
  \psi = (v_0, v_{a+1})\prod_{i=1}^{\lfloor a/2\rfloor}
 (v_i, v_{a+1-i})(v_{a+1+i},v_{2a+2-i})
 \prod_{j=1}^{\lfloor b/2\rfloor} (v_{2a+1+j}, v_{n+1-j})
 \]
 which is the product of an even number of disjoint
 transpositions if and only if $\lfloor b/2\rfloor$
 is an odd integer.
 If $a\equiv 0\pmod{2}$ and $b\equiv 0\pmod{4}$, then
 the distance from $v_0$ to $v_{a+1}$ is odd and
 $\rho$ can be expressed as the product of 
 an odd number of vertex transpositions, and
 $f\rho$ is a vertex of $\widetilde{\rm puz}_0(\theta_{a,b})$
 for every vertex $f$ of $\widetilde{\rm puz}_0(\theta_{a,b})$.
 Since $\psi$ is also the product of the odd number
 ($a+1+b/2$) of disjoint transpositions of the vertices,
 and $f\psi$
 is also a vertex of $\widetilde{\rm puz}_0(\theta_{a,b})$
 for every vertex $f$ of $\widetilde{\rm puz}_0(\theta_{a,b})$.
 Since $\sigma_{p_k}$ is an odd permutation of the vertices
 for $k=1,2,3$,
 $\sigma_{p_k}\rho$ and $\sigma_{p_k}\psi$ are  both even permutations fixing $v_0$.
 Thus we have $S_1, S_2\subset A_n$.
 Another cases can be shown in the same manner.
\end{proof}

In the following theorem, we consider the graph $\widetilde{\rm puz}(\theta_{a,b})$ as
a bi-directed graph, that is, if there is an edge
$\{f,g\}$ connecting $f$ and $g$ in $\widetilde{\rm puz}(\theta_{a,b})$, 
we consider there are two edges of opposite directions, from $f$ to $g$ and from $g$ to $f$.
Therefore, the symbol $\cong$ in the theorem stands for the isomorphism
of two directed graphs.
\begin{thm}
 \label{thm:existence}
 Let $a$ be a positive integer, and
 $b$ a non-negative integer such that
 $(a,b)\neq (2,1)$.
 Let $S_1$ and $S_2$
 be defined by $(\ref{eq:S1})$ and $(\ref{eq:S2})$ respectively. 
 Then, we have
 \begin{equation}
  \label{eq:isomorphism1}
 \widetilde{{\rm puz}}(\theta_{a,b})
 \cong
 Y({\mathfrak S}_n, S_1)
 \cong
 Y({\mathfrak S}_n, S_2),
 \end{equation}
 where $n=2a+b+1$ and $\widetilde{{\rm puz}}(\theta_{a,b})$
 is the path contraction of ${\rm puz}(\theta_{a,b})$.

 If $a+b$ is an odd integer, then each of
 $S_1$ and $S_2$ generates the symmetric group ${\mathfrak S}_n$ 
 of degree $n$, and $\widetilde{\rm puz}(\theta_{a,b})$ is a
 strongly connected graph.

 If $a\equiv 0\pmod{2}$ and $b\equiv 0\pmod{4}$, then
 \[
  \widetilde{\rm puz}_0(\theta_{a,b})
 \cong Y(A_n, S_1) \cong Y(A_n, S_2),
 \]
 and each of $S_1$ and $S_2$ generates $A_n$.

 If $a\equiv 0\pmod{2}$ and $b\equiv 2\pmod{4}$, then
 \[
  \widetilde{\rm puz}_0(\theta_{a,b})
 \cong X({\mathfrak S}_n, S_1) \cong X({\mathfrak S}_n, S_2),
 \]
 and each of $S_1$ and $S_2$ generates ${\mathfrak S}_n$.

 If $a\equiv 1\pmod{2}$ and $b\equiv 1\pmod{4}$, then
 \[
  \widetilde{\rm puz}_0(\theta_{a,b})
 \cong Y(A_n, S_1) \cong  X({\mathfrak S}_n, S_2),
 \]
 $S_1$ generates $A_n$, and $S_2$ generates ${\mathfrak S}_n$.

 If $a\equiv 1\pmod{2}$ and $b\equiv 3\pmod{4}$, then
 \[
  \widetilde{\rm puz}_0(\theta_{a,b})
 \cong X({\mathfrak S}_n, S_1) \cong  Y(A_n, S_2),
 \]
 $S_1$ generates ${\mathfrak S}_n$, and $S_2$ generates $A_n$.
\end{thm}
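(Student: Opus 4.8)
The plan is to prove the central isomorphism~(\ref{eq:isomorphism1}) by exhibiting an explicit coordinate system on $\widetilde{\rm puz}(\theta_{a,b})$, and then to read off connectivity and the four parity cases from Theorem~\ref{thm:wilson} and Lemma~\ref{lem:rhopsiAuto}. First I would record the combinatorial shape of $\widetilde{\rm puz}(\theta_{a,b})$. The only vertices of $\theta_{a,b}$ of degree $3$ are the two hubs $v_0$ and $v_{a+1}$; every other vertex has degree $2$. Since a position has degree in ${\rm puz}(\theta_{a,b})$ equal to the degree of its blank vertex, path contraction deletes exactly the positions whose blank lies in the interior of one of the three arcs $p_1,p_2,p_3$ and collapses each maximal arc-corridor to a single edge. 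Hence $V(\widetilde{\rm puz})$ consists of the $2\cdot n!$ positions with blank at $v_0$ or at $v_{a+1}$, the graph is bipartite with these two classes as parts, and the three edges leaving a blank-at-$v_0$ position $f$ are precisely $\{f,\,f\circ\sigma_{p_k}\}$ for $k=1,2,3$, each landing in the blank-at-$v_{a+1}$ class.

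Next I set up coordinates. Writing $\widehat{\pi}$ for the permutation of $V$ fixing $v_0$ with $\widehat{\pi}(v_i)=v_{\pi(i)}$, every blank-at-$v_0$ position is $f_0\circ\widehat{\pi}$ for a unique $\pi\in{\mathfrak S}_n$, and since $\rho(v_0)=v_{a+1}$ the position $g\circ\rho$ has blank at $v_0$ whenever $g$ has blank at $v_{a+1}$; I therefore assign $(\pi,0)$ to $f_0\circ\widehat{\pi}$ and $(\pi,1)$ to the $g$ with $g\circ\rho=f_0\circ\widehat{\pi}$, obtaining a bijection $\Phi_\rho:V(\widetilde{\rm puz})\to{\mathfrak S}_n\times C_2$. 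The key computation is that sliding $f=f_0\circ\widehat{\pi}$ along $p_k$ gives $g=f\circ\sigma_{p_k}$ with $g\circ\rho=f_0\circ\widehat{\pi}\circ(\sigma_{p_k}\rho)=f_0\circ\widehat{\pi\sigma_k}$, using $\sigma_{p_k}\rho=\widehat{\sigma_k}$ from~(\ref{eq:defsigmak}); thus $\Phi_\rho(g)=(\pi\sigma_k,1)=\Phi_\rho(f)\cdot(\sigma_k,1)$, so $\Phi_\rho$ carries every edge of $\widetilde{\rm puz}$ to an edge of ${\tt Cay}({\mathfrak S}_n\times C_2,S_1\times\{1\})=Y({\mathfrak S}_n,S_1)$. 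To see this is an isomorphism of bi-directed graphs I would verify $S_1=S_1^{-1}$: since $\rho$ is an involutive automorphism interchanging the arcs $p_1,p_3$ and preserving $p_2$, one has $\sigma_k^{-1}=\rho\,\sigma_{p_k}^{-1}=\sigma_{p_j}\rho=\sigma_j$ for the arc $p_j=\rho(\overline{p_k})$, giving $\sigma_1^{-1}=\sigma_3$ and $\sigma_2^{-1}=\sigma_2$. Replacing $\rho$ by the flip $\psi$ (which swaps $v_0,v_{a+1}$ but preserves each arc) yields in the same way a bijection $\Phi_\psi:\widetilde{\rm puz}\cong Y({\mathfrak S}_n,S_2)$, where now each $\tau_k$ is an involution so $S_2=S_2^{-1}$ automatically. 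This establishes~(\ref{eq:isomorphism1}).

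Finally I would turn to connectivity. A theta graph is bipartite if and only if its three arc-lengths $a+1,a+1,b+1$ share a parity, i.e. if and only if $a+b$ is even; the hypothesis $(a,b)\neq(2,1)$ guarantees $\theta_{a,b}\neq\theta_0$, and it is never a polygon. When $a+b$ is odd, Theorem~\ref{thm:wilson} gives that ${\rm puz}(\theta_{a,b})$, hence $\widetilde{\rm puz}(\theta_{a,b})$, is connected; being bi-directed and connected it is strongly connected, and connectivity of $Y({\mathfrak S}_n,S_i)$ forces $\langle S_i\rangle={\mathfrak S}_n$. When $a+b$ is even, Theorem~\ref{thm:wilson} gives exactly two components, so $\widetilde{\rm puz}_0(\theta_{a,b})$ has $n!$ vertices, and Wilson's even/odd criterion identifies its blank-at-$v_0$ positions as exactly those whose first $\Phi_\rho$-coordinate $\pi_f$ lies in $A_n$. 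The blank-at-$v_{a+1}$ positions are then governed by Lemma~\ref{lem:rhopsiAuto}: if $\rho\in{\rm Aut}(\widetilde{\rm puz}_0)$ then $\Phi_\rho$ restricts to a bijection onto $A_n\times C_2$, giving $\widetilde{\rm puz}_0\cong Y(A_n,S_1)$ with $\langle S_1\rangle=A_n$, whereas if $\rho$ swaps the two components then $g\circ\rho$ lands in the other component, carrying the blank-at-$v_{a+1}$ positions onto the odd permutations, so that $\Phi_\rho$ becomes a bijection onto ${\mathfrak S}_n$ and $\widetilde{\rm puz}_0\cong X({\mathfrak S}_n,S_1)$ with $\langle S_1\rangle={\mathfrak S}_n$; the same dichotomy for $\psi$ handles $S_2$, and running this through the four parity classes of Lemma~\ref{lem:rhopsiAuto} produces the four displayed cases. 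I expect the main obstacle to be precisely this last matching: aligning Wilson's distance-and-parity criterion for membership in $\widetilde{\rm puz}_0$ with the parity of the coordinate $\pi_f$ under $\Phi_\rho$ and $\Phi_\psi$, so that the coordinate image is exactly $A_n\times C_2$ (respectively all of ${\mathfrak S}_n$) in each case.
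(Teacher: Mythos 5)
Your proposal is correct and follows essentially the same route as the paper: your coordinate map $\Phi_\rho$ is exactly the paper's bijection $f\mapsto(\sigma_f,c_f)$, your slide computation $\Phi_\rho(f\circ\sigma_{p_k})=\Phi_\rho(f)\cdot(\sigma_k,1)$ is the paper's relation $\sigma_g=\sigma_f\sigma_k$ from $(\ref{eq:defsigmak})$, and the case analysis via Theorem~\ref{thm:wilson} and Lemma~\ref{lem:rhopsiAuto} matches the paper's treatment of the parity cases, including the identification of components of $Y({\mathfrak S}_n,S_i)$ with $Y(A_n,S_i)$ or $X({\mathfrak S}_n,S_i)$. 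The only cosmetic differences are that you handle the reverse edges by verifying $S_i=S_i^{-1}$ through the symmetry action on the arcs, where the paper instead computes the blank-at-$v_{a+1}$ case directly via $g=f\rho\sigma_{p_k}\rho$, and that you count $|\widetilde{V}_{a,b}|=2\,n!$ by identifying contracted vertices with hub-blank positions rather than by the paper's degree-counting argument.
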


\begin{proof}
Given a position $f$, we define the permutation $\sigma_f\in {\mathfrak S}_n$
as follows.
If $f$ has the blank at $v_0$, that is $f(v_0)=0$, then
$\sigma_f$ is defined by 
\[
 f(v_i) = \sigma_f(i),
\]
for $i=1,2,\ldots,n$.
If $f$ has the blank at $v_{a+1}$, then
%the position $f\rho$ has blank at $v_0$, that is
$f\rho(v_0)=0$ and 
$\sigma_f$  is the unique permutation which satisfies
\[
 f\rho(v_i) = \sigma_f(i),
\]
for $i=1,2,\ldots,n$.
Let $c_f\in \{0,1\}$ be defined by
\[
 c_f = \begin{cases}
	0 & \mbox{~~ if ~~} f(v_0) = 0,\\
	1 & \mbox{~~ if ~~} f(v_{a+1}) = 0.\\
       \end{cases}
\]
Then the map
\[
 f \mapsto (\sigma_f, c_f)
\]
is a bijection from the vertex set of $\widetilde{{\rm puz}}(\theta_{a,b})$
to ${\mathfrak S}_n\times C_2$.
Let $f$ be a vertex of $\widetilde{{\rm puz}}(\theta_{a,b})$.
Then, since  $\widetilde{{\rm puz}}(\theta_{a,b})$ is obtained
by the path contraction,
$f^{-1}(0)\in\{v_0, v_{a+1}\}$.
If $f^{-1}(0)=v_0$ and there exists an edge from $f$ to a position $g$,
there exists $k\in\{1,2,3\}$  such that
$g=f \sigma_{p_k}$ and $g(v_{a+1})=0$. 
Therefore, using the relation $(\ref{eq:defsigmak})$ we obtain
$ g\rho(v_i) = f\sigma_{p_k}\rho(v_i) = f(v_{\sigma_k(i)})$.
This implies
\begin{equation}
 \label{eq:cayley}
 \sigma_g = \sigma_f\sigma_{k}. 
\end{equation}
If $f(v_{a+1})=0$ and there exists an edge from $f$ to a position $g$,
there exists $k\in\{1,2,3\}$ such that
$g=f \rho\sigma_{p_k}\rho$ and $g(v_{0})=0$. Then,
\[
 \sigma_g(i) = g(v_i) = f\rho\sigma_{p_k}\rho(v_i)
 = f\rho(v_{\sigma_k(i)}) = \sigma_f(\sigma_k(i)),
\]
and $(\ref{eq:cayley})$ holds.
Thus we have obtained the first isomorphism in $(\ref{eq:isomorphism1})$.
The second isomorphism in $(\ref{eq:isomorphism1})$ can be
obtained by using $\psi$ instead of $\rho$.

Let ${V}_{a,b}$ be the vertex set of ${\rm puz}(\theta_{a,b})$ and
$\widetilde{V}_{a,b}$ the vertex set of $\widetilde{\rm puz}(\theta_{a,b})$.
It is clear that $|{V}_{a,b}| = (2a+b+2)!$, since
$\theta_{a,b}$ has $2a+b+2$ vertices.
Since a vertex in $\widetilde{V}_{a,b}$ can be identified with
a vertex in $V_{a,b}$ of degree three, we have
\[
 |V_{a,b}|=|\widetilde{V}_{a,b}|+\frac{1}{2}\left(2a|\widetilde{V}_{a,b}| + b|\widetilde{V}_{a,b}|\right)
 =\frac{2a+b+2}{2}|\widetilde{V}_{a,b}|.
\]
Thus we have
\[
 |\widetilde{V}_{a,b}| = 2 n!.
\]
%and that each of $S_1\times\{1\}$ and $S_2\times\{1\}$
%generates ${\mathfrak S}_n\times C_2$.

If $a+b$ is an odd integer, then 
${\rm puz}(\theta_{a,b})$ is a connected graph.
Therefore both  $S_1\times\{1\}$ and $S_2\times\{1\}$
generate the group ${\mathfrak S}_n\times C_2$,
and hence
each of $S_1$ and $S_2$ generates ${\mathfrak S}_n$.

If $a+b$ is an even integer, then $\theta_{a,b}$ is bipartite
and ${\rm puz}(\theta_{a,b})$ has two isomorphic connected components,
${\rm puz}_0(\theta_{a,b})$ and the other one.
Let $\widetilde{V}^0_{a,b}$ be the vertex set of $\widetilde{\rm puz}_0(\theta_{a,b})$.
Then it is clear that
\[
 \left|\widetilde{V}^0_{a,b}\right| = \frac{1}{2} \left|\widetilde{V}_{a,b}\right|
 = n!.
\]

If $a\equiv 0\pmod{2}$ and $b\equiv 0\pmod{4}$, then,
by Lemma $\ref{lem:rhopsiAuto}$,
we have $S_1, S_2\subset A_n$.
Since $Y({\mathfrak S}_n, S_i)$ has two connected
components of size $n!$, one of the two components
has the vertex set $A_n\times C_2$,
and the other $\left({\mathfrak S}_n\backslash A_n\right)\times C_2$
 for $i=1,2$.
Thus each of two components is isomorphic to
$Y(A_n, S_i)$,
which implies that each of $S_1$ and $S_2$ generates $A_n$.

If $a\equiv 0\pmod{2}$ and $b\equiv 2\pmod{4}$, then,
by Lemma $\ref{lem:rhopsiAuto}$,
we have $S_1, S_2\subset {\mathfrak S}_n\backslash A_n$.
One of the two connected components of
$Y({\mathfrak S}_n, S_i)$ contains the vertex $({\rm id},0)$,
and the other contains $({\rm id},1)$.
Each component is isomorphic to $X({\mathfrak S}_n, S_i)$
for $i=1,2$,
which implies each of $S_1$ and $S_2$ generates ${\mathfrak S}_n$.

If $a\equiv 1\pmod{2}$ and $b\equiv 1\pmod{4}$, then,
by Lemma $\ref{lem:rhopsiAuto}$,
we have $S_1\subset A_n$ and  $S_2\subset {\mathfrak S}_n\backslash A_n$.
Hence $Y({\mathfrak S}_n, S_1)$ has two connected components,
each of which is isomorphic to $Y(A_n, S_1)$, and
$Y({\mathfrak S}_n, S_2)$ has two connected components,
each of which is isomorphic to $X({\mathfrak S}_n, S_2)$.
This implies $S_1$ generates $A_n$
and $S_2$ generates ${\mathfrak S}_n$.

If $a\equiv 1\pmod{2}$ and $b\equiv 3\pmod{4}$, then,
by Lemma $\ref{lem:rhopsiAuto}$,
we have $S_1\subset {\mathfrak S}_n\backslash A_n$ and  $S_2\subset A_n$,
and the statement for this case can be proved
in the same manner as the previous case.
\end{proof}

\begin{rmk}
 Theorem $\ref{thm:existence}$ gives examples
 of the pairs of isomorphic Cayley graphs on the non-isomorphic groups ${\mathfrak S}_n$ and $A_n\times C_2$.
 For instance, when $a=b=1$, we have
 \[
  Y(A_4, \left\{(1,3,2),(1,3)(2,4),(1,2,3)\right\})
 \cong
 X({\mathfrak S}_4, \left\{(1,2),(2,4),(2,3)\right\}).
 \]
\end{rmk}

\begin{exm}\label{exm:theta1b}
 Let $a=1$ and $b=2m$ for non-negative integer $m$.
 Then $\widetilde{\rm puz}(\theta_{a,b})$ is connected
 and we have
 \begin{eqnarray*}
  \sigma_1 & = &
 \begin{cases}
 \begin{pmatrix}
  1 & 2 & 3 & 4 & 5 & \cdots & n\\
  3 & 1 & 2 & n & n-1& \cdots & 4
 \end{pmatrix}
  & m >0, \\\\
  (1,3,2) & m = 0.
 \end{cases}
 \\
  \sigma_2 & = &
 \begin{cases}
  \begin{pmatrix}
   1 & 2 & 3 & 4 & 5 & \cdots & n\\
   3 & 4 & 1 & 2 & n & \cdots & 5
  \end{pmatrix} &
  m > 0,\\\\
  (1,3) &
  m = 0.
 \end{cases}
 \\
  \\
  \sigma_3 & = & \sigma_1^{-1}.
\end{eqnarray*}
\begin{eqnarray*}
  \tau_1 & = &(2,3)\sigma_1 = 
 \begin{pmatrix}
  1 & 2 & 3 & 4 & 5 & \cdots & n\\
  2 & 1 & 3 & n & n-1& \cdots & 4
 \end{pmatrix}.
 \\
 \tau_2 & = & (1,3)\sigma_2 =
 \begin{cases}
 \begin{pmatrix}
  1 & 2 & 3 & 4 & 5 & \cdots & n\\
  1 & 4 & 3 & 2 & n & \cdots & 5
 \end{pmatrix} &
  m > 0,\\\\
  {\rm id} & m = 0.
 \end{cases}
 \\
  \tau_3 & = & (1,2)\sigma_3 =
 \begin{pmatrix}
  1 & 2 & 3 & 4 & 5 & \cdots & n\\
  1 & 3 & 2 & n & n-1& \cdots & 4
 \end{pmatrix} .
\end{eqnarray*}
% \[
%  \tau_1 = (2,3)\sigma_1 = 
% \begin{pmatrix}
%  1 & 2 & 3 & 4 & 5 & \cdots & n\\
%  2 & 1 & 3 & n & n-1& \cdots & 4
% \end{pmatrix}.
% \]
% \[
%  \tau_2 = (1,3)\sigma_2 =
% \begin{cases}
% \begin{pmatrix}
%  1 & 2 & 3 & 4 & 5 & \cdots & n\\
%  1 & 4 & 3 & 2 & n & \cdots & 5
% \end{pmatrix} &
%  m > 0,\\
%  {\rm id} & m = 0.
% \end{cases}
% \]
% \[
%  \tau_3 = (1,2)\sigma_3 =
% \begin{pmatrix}
%  1 & 2 & 3 & 4 & 5 & \cdots & n\\
%  1 & 3 & 2 & n & n-1& \cdots & 4
% \end{pmatrix} .
% \]
 The diameters of the Cayley graphs $X({\mathfrak S}_{2m+3}, S_i)$ for $i=1,2$ and $m=0,1,2$
 are listed in the following table.
 \[
  \begin{array}{c|ccc}
   i\backslash m & 0 & 1 & 2\\
   \hline
   1 & 2 & 10 & 17\\
   2 & 3 & 9 & 17
  \end{array}
 \]
 The first example shown in Section $1$ is obtained
 from the case when $m=0$. (See Figure $\ref{fig:XY}$.)
 \begin{center}
  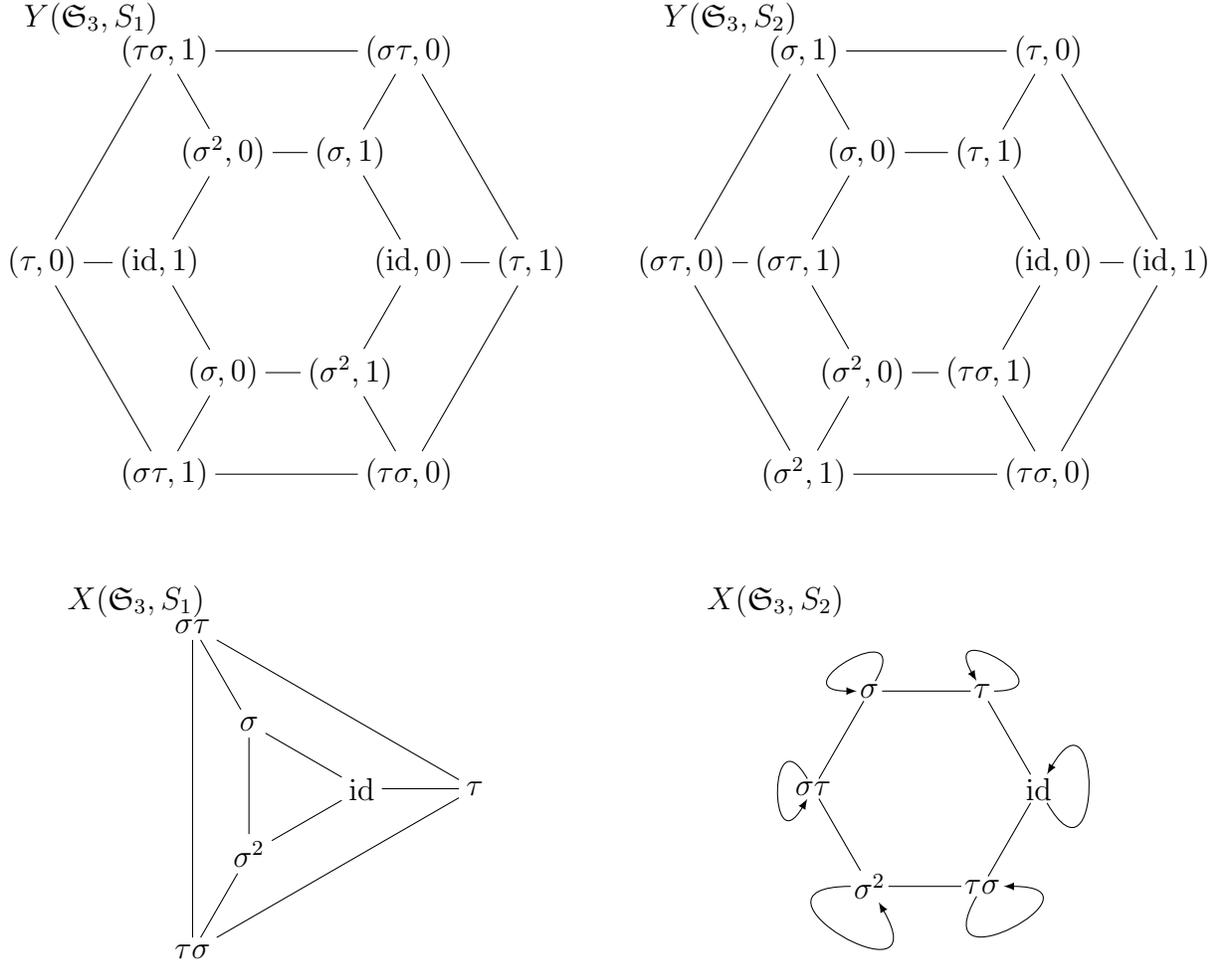
\begin{figure}[H]
   \begin{center}
    \begin{tikzpicture}[scale=1]
    \node[inner sep=2.5](1) at (240:1) {$\sigma^{2}$};
    \node[inner sep=2.5](2) at (120:1) {$\sigma$};
    \node[inner sep=2.5](3) at (0:1) {${\rm id}$};
    \node[inner sep=2.5](4) at (240:2.5) {$\tau\sigma$};
    \node[inner sep=2.5](5) at (120:2.5) {$\sigma\tau$};
    \node[inner sep=2.5](6) at (0:2.5) {$\tau$};
     \draw[>=latex] (1)--(2);
     \draw[>=latex] (2)--(3);
     \draw[>=latex] (3)--(1);
     \draw[>=latex] (4)--(5);
     \draw[>=latex] (5)--(6);
     \draw[>=latex] (6)--(4);
     \draw[>=latex] (1)--(4);
     \draw[>=latex] (2)--(5);
     \draw[>=latex] (3)--(6);
    \draw (-2,2.5) node {$X({\mathfrak S}_3, S_1)$};
     \begin{scope}[yshift=7cm,scale=1.3]
    \node[inner sep=2.5](10) at (120:1.3) {$(\sigma^{2},0)$};
    \node[inner sep=2.5](21) at (60:1.3) {$(\sigma,1)$};
    \node[inner sep=2.5](30) at (0:1.3) {$({\rm id},0)$};
    \node[inner sep=2.5](11) at (300:1.3) {$(\sigma^{2},1)$};
    \node[inner sep=2.5](20) at (240:1.3) {$(\sigma,0)$};
    \node[inner sep=2.5](31) at (180:1.3) {$({\rm id},1)$};
    \node[inner sep=2.5](41) at (120:2.5) {$(\tau\sigma,1)$};
    \node[inner sep=2.5](50) at (60:2.5) {$(\sigma\tau,0)$};
    \node[inner sep=2.5](61) at (0:2.5) {$(\tau,1)$};
    \node[inner sep=2.5](40) at (300:2.5) {$(\tau\sigma,0)$};
    \node[inner sep=2.5](51) at (240:2.5) {$(\sigma\tau,1)$};
    \node[inner sep=2.5](60) at (180:2.5) {$(\tau,0)$};
     \draw[>=latex] (10)--(21);
     \draw[>=latex] (21)--(30);
     \draw[>=latex] (30)--(11);
     \draw[>=latex] (11)--(20);
     \draw[>=latex] (20)--(31);
     \draw[>=latex] (31)--(10);
     \draw[>=latex] (41)--(50);
     \draw[>=latex] (50)--(61);
     \draw[>=latex] (61)--(40);
     \draw[>=latex] (40)--(51);
     \draw[>=latex] (51)--(60);
     \draw[>=latex] (60)--(41);
     \draw[>=latex] (10)--(41);
     \draw[>=latex] (21)--(50);
     \draw[>=latex] (30)--(61);
     \draw[>=latex] (11)--(40);
     \draw[>=latex] (20)--(51);
     \draw[>=latex] (31)--(60);
      \draw (-2,2.5) node {$Y({\mathfrak S}_3, S_1)$};
     \end{scope}
     \begin{scope}[xshift=8.5cm]
    \node[inner sep=1](c1) at (0:1.5) {${\rm id}$};
    \node[inner sep=1](c2) at (60:1.5) {$\tau$};
    \node[inner sep=1](c3) at (120:1.5) {$\sigma$};
    \node[inner sep=1](c4) at (180:1.5) {$\sigma\tau$};
    \node[inner sep=1](c5) at (240:1.5) {$\sigma^2$};
    \node[inner sep=1](c6) at (300:1.5) {$\tau\sigma$};
     \draw[>=latex] (c1)--(c2);
     \draw[>=latex] (c2)--(c3);
     \draw[>=latex] (c3)--(c4);
     \draw[>=latex] (c4)--(c5);
     \draw[>=latex] (c5)--(c6);
     \draw[>=latex] (c6)--(c1);
     \path (c1) edge[loop, looseness=10, ->, >=latex, in=60, out=-60] (c1);
     \path (c2) edge[loop, looseness=10, ->, >=latex, in=120, out=-0] (c2);
     \path (c3) edge[loop, looseness=10, ->, >=latex, in=180, out=60] (c3);
     \path (c4) edge[loop, looseness=10, ->, >=latex, in=240, out=120] (c4);
     \path (c5) edge[loop, looseness=10, ->, >=latex, in=300, out=180] (c5);
     \path (c6) edge[loop, looseness=10, ->, >=latex, in=360, out=240] (c6);
    \draw (-2,2.5) node {$X({\mathfrak S}_3, S_2)$};
     \begin{scope}[yshift=7cm,scale=1.3]
    \node[inner sep=2.5](10) at (120:1.3) {$(\sigma,0)$};
    \node[inner sep=2.5](21) at (60:1.3) {$(\tau,1)$};
    \node[inner sep=2.5](30) at (0:1.3) {$({\rm id},0)$};
    \node[inner sep=2.5](11) at (300:1.3) {$(\tau\sigma,1)$};
    \node[inner sep=2.5](20) at (240:1.3) {$(\sigma^{2},0)$};
    \node[inner sep=2.5](31) at (180:1.3) {$(\sigma\tau,1)$};
    \node[inner sep=2.5](41) at (120:2.5) {$(\sigma,1)$};
    \node[inner sep=2.5](50) at (60:2.5) {$(\tau,0)$};
    \node[inner sep=2.5](61) at (0:2.5) {$({\rm id},1)$};
    \node[inner sep=2.5](40) at (300:2.5) {$(\tau\sigma,0)$};
    \node[inner sep=2.5](51) at (240:2.5) {$(\sigma^2,1)$};
    \node[inner sep=2.5](60) at (180:2.5) {$(\sigma\tau,0)$};
     \draw[>=latex] (10)--(21);
     \draw[>=latex] (21)--(30);
     \draw[>=latex] (30)--(11);
     \draw[>=latex] (11)--(20);
     \draw[>=latex] (20)--(31);
     \draw[>=latex] (31)--(10);
     \draw[>=latex] (41)--(50);
     \draw[>=latex] (50)--(61);
     \draw[>=latex] (61)--(40);
     \draw[>=latex] (40)--(51);
     \draw[>=latex] (51)--(60);
     \draw[>=latex] (60)--(41);
     \draw[>=latex] (10)--(41);
     \draw[>=latex] (21)--(50);
     \draw[>=latex] (30)--(61);
     \draw[>=latex] (11)--(40);
     \draw[>=latex] (20)--(51);
     \draw[>=latex] (31)--(60);
      \draw (-2,2.5) node {$Y({\mathfrak S}_3, S_2)$};
     \end{scope}
     \end{scope}
    \end{tikzpicture}
    \caption{$X({\mathfrak S}_3,S_1), X({\mathfrak S}_3,S_2), Y({\mathfrak S}_3,S_1)$ and $Y({\mathfrak S}_3, S_2)$ with
    $\sigma=(1,2,3), \tau=(1,2)\in {\mathfrak S}_3$,
    We can observe the isomorphism $(\ref{eq:isomorphism1})$.
    }
    \label{fig:XY}
   \end{center}
  \end{figure}
 \end{center}
\end{exm}

\section{Random walks}
In the rest of the paper we assume the following:
 $a$ is a positive integer and $b$ is a non-negative integer 
 satisfying one of the following two conditions ${\bf \ref{cond:odd}}$
 and ${\bf \ref{cond:even}}$. 
 \begin{enumerate}[font=\bfseries, label=\arabic*]
  \item \label{cond:odd}
	$a+b$ is odd and $(a,b)\neq (2,1)$.
  \item  \label{cond:even}
	 $a$ is even  and $b$ is divisible by four.
 \end{enumerate}
The group $G$ is defined by
 \begin{equation}
\label{eq:Gcases}
 G = \begin{cases}
      {\mathfrak S}_{n} &
      {\bf \ref{cond:odd}} \mbox{~ holds},\\
      A_{n} &
      {\bf \ref{cond:even}} \mbox{~ holds},
     \end{cases}
 \end{equation}
where  $n=2a+b+1$.
The subsets $S_1$ and $S_2$ of $G$ are defined by $(\ref{eq:S1})$ and $(\ref{eq:S2})$ respectively.
Under these assumptions, by Theorem $\ref{thm:existence}$,
we have
\begin{equation}
\label{eq:covereq}
 Y(G, S_1) \cong Y(G, S_2).
\end{equation}
This section shows that the triple $(G, S_1, S_2)$
has the property ${\bf \ref{cond:RW}}$.

We first consider the simple random walk on $X(G,S)$
for an arbitrary subset $S$ of $G$,
which starts at the identity element ${\rm id}$ of $G$.
Namely we consider the sequence $(\mu^{(0)}, \mu^{(1)}, \mu^{(2)},\ldots)$
of the probability measures on $G$ defined as follows.
\[
 \mu^{(0)}(g) = \begin{cases}
		 1 & g = {\rm id},\\
		 0 & g \neq {\rm id},
		\end{cases}
\]
\[
 \mu^{(t+1)}(g) = \frac{1}{d}\sum_{k\in S}\mu^{(t)}(gk^{-1}),
\]
where $d=|S|$.
Therefore $\mu^{(t)}$ has another expression
\[
 \mu^{(t)}(g) = \frac{1}{d^t}\left|{\mathcal P}^{(t)}({\rm id}, g)\right|
\]
where ${\mathcal P}^{(t)}(g,h)$ is the set of paths from
$g$ to $h$ in $X(G,S)$.
%and each $p_k$ is an indeterminate
%satifying $\sum_{k\in S}p_k=1$.%, the degree of vertices in ${\tt Cay}(G,S)$.
%Thus, each $\mu^{(t)}(g)$ takes value in the ring
%${\mathbb Z}[p_{k_1},p_{k_2},\ldots, p_{k_d}]/(p_{k_1}+p_{k_2}+\cdots +p_{k_d}-1)$.
%Since ${\tt Cay}(G,S)$ is a connected graph,
If $X(G,S)$ is connected and aperiodic, then
$\mu^{(t)}$ converges to the uniform distribution $U_G$ on $G$,
\[
 \lim_{t\to\infty} \mu^{(t)}(g) = U_G(g) = \frac{1}{|G|},
\]
for all $g\in G$. (See \cite{levin2017markov} for the detail.)
The speed of the convergence is measured by the total 
variation distance ${\rm d}_{TV}(\mu^{(t)},U_G)$ defined
by
\[
 {\rm d}_{TV}(\mu^{(t)},U_G) = \frac{1}{2}\sum_{g\in G}\left|\mu^{(t)}(g)-U_G(g)\right|.
\]

For an  edge $e$  from $(g,c)$ to $(h,c+1)$
in $Y(G,S)$, we define $\pi_E(e)$ to be the edge
from $g$ to $h$ in $X(G,S)$.
Let $\pi_V:G\times C_2\to G$ be defined by
$\pi_V(g,c) = g$.
We state the following trivial fact as a lemma,
which is frequently used later.
\begin{lem}\label{lem:doublecover}
Let $\pi_V$ and $\pi_E$ be defined as above.
Then the pair $\pi=(\pi_V, \pi_E)$ 
is a double covering map 
from $Y(G,S)$ to $X(G,S)$. That is,
\begin{enumerate}
 \item $\pi$ is a graph morphism$:$
       $\pi_V(o(e))=o(\pi_E(e))$ and
       $\pi_V(t(e))=t(\pi_E(e))$,
       where $o(e)$ is the starting vertex of $e$ and
       $t(e)$ is the terminal vertex of $e$.
 \item $\pi_V$ and $\pi_E$ are two-to-one surjections.

 \item \label{prop:uniqlift}
       The restriction $\pi_E|_{E_x}$ of $\pi_E$ 
       is a bijection from $E_x$ to $E_{\pi_V(x)}$
       for every $x\in G\times C_2$,
       where $E_x$ is the set of edges starting from a vertex $x$.
\end{enumerate}
\end{lem}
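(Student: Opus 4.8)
The plan is to verify the three properties by unwinding the definitions of the two Cayley graphs, treating the edges as labelled by generators so that the construction is unambiguous even when ${\rm id}\in S$ (which occurs, e.g., for the $S_2$ of the introductory example) or when two generators send a vertex to the same target. First I would fix the convention that an edge of $X(G,S)$ is a pair $(g,k)$ with $g\in G$ and $k\in S$, having origin $o(g,k)=g$ and terminus $t(g,k)=gk$; likewise an edge of $Y(G,S)$ is a pair $\bigl((g,c),k\bigr)$ with origin $(g,c)$ and terminus $(gk,c+1)$, since in $G\times C_2$ the generator $(k,1)$ sends $(g,c)$ to $(gk,c+1)$. With this convention $\pi_E\bigl((g,c),k\bigr)=(g,k)$ is manifestly well defined and lands in the edge set of $X(G,S)$.

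For the morphism property I would simply compute both sides. For an edge $e=\bigl((g,c),k\bigr)$ we have $\pi_V(o(e))=\pi_V(g,c)=g=o(g,k)=o(\pi_E(e))$ and $\pi_V(t(e))=\pi_V(gk,c+1)=gk=t(g,k)=t(\pi_E(e))$, which is exactly item (1). For the two-to-one surjectivity of item (2) I would exhibit the fibres directly: for $g\in G$ the fibre $\pi_V^{-1}(g)=\{(g,0),(g,1)\}$ has exactly two elements, and for an edge $(g,k)$ of $X(G,S)$ the fibre $\pi_E^{-1}(g,k)=\{\,((g,0),k),((g,1),k)\,\}$ again has exactly two elements; surjectivity is immediate since every $g$ and every $(g,k)$ is hit.

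For the local bijection of item (3), I would note that $E_{(g,c)}=\{\,((g,c),k)\mid k\in S\,\}$ and $E_g=\{\,(g,k)\mid k\in S\,\}$, and that $\pi_E|_{E_{(g,c)}}$ is the map $((g,c),k)\mapsto(g,k)$ that merely forgets the second coordinate $c$; this is a bijection of $E_{(g,c)}$ onto $E_{\pi_V(g,c)}=E_g$ with inverse $(g,k)\mapsto((g,c),k)$.

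Since every step is pure definition-chasing, there is no real obstacle; the only point needing care is the bookkeeping of edges as generator-labelled pairs rather than as unordered vertex pairs. This is what keeps item (3) a genuine bijection even in the presence of loops: when $k={\rm id}\in S$ the edge $(g,{\rm id})$ is a loop at $g$ in $X(G,S)$, while its two lifts $((g,0),{\rm id})$ and $((g,1),{\rm id})$ are the non-loop edges from $(g,0)$ to $(g,1)$ and from $(g,1)$ to $(g,0)$ of the bipartite graph $Y(G,S)$; nonetheless each vertex of $Y(G,S)$ still carries exactly one lifted edge for each $k\in S$, preserving the local-bijection property that characterises a covering map.
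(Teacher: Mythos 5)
Your proof is correct, and it matches the paper's intent exactly: the paper states this lemma without proof, introducing it as a ``trivial fact,'' and your definition-chasing verification of the three properties is precisely the argument being taken for granted. Your one substantive addition --- treating edges as generator-labelled pairs $(g,k)$ and $((g,c),k)$ so that $\pi_E$ is well defined and item (3) remains a genuine bijection even when ${\rm id}\in S$ creates loops or when distinct generators share a target --- is careful bookkeeping the paper glosses over (it only acknowledges the loop issue later, in the remark on the exceptional case $\widetilde{\rm puz}_0(\theta_{1,0})$ where $S_2=\{(1,2),(2,3),{\rm id}\}$), and it is worth making explicit.
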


By Lemma $\ref{lem:doublecover}$, $\pi_E$ can induce
the map ${\pi}_P$ from the set of paths in $Y(G,S)$ to those of $X(G,S)$.
That is, if $p=(e_1,e_2,\ldots,e_t)$ is a path
of length $t$ in $Y(G,S)$ where each $e_i$ is an edge,
then $\pi_P(p)=(\pi_E(e_1),\pi_E(e_2),\ldots,\pi_E(e_t))$
is a path in $X(G,S)$.
We define ${\mathcal P}_X^{(t)}(g,h)$ as the set of the paths of length $t$ 
starting from a vertex $g$ and terminating at a vertex $h$ in a graph $X$.
\begin{lem}
 \label{lem:pathbijection}
%Let $Y={\tt Cay}(G\times C_2, S\times\{1\})$ and
%$X={\tt Cay}(G, S)$.
%Then %, by the property $\ref{prop:uniqlift}$ of Lemma $\ref{lem:doublecover}$,
Let $X(G,S)$, $Y(G,S)$ and $\pi_P$ be defined as above.
Then the restriction of $\pi_P$ to
${\mathcal P}_{Y}^{(t)}((g,c),(h,[c+t]))$
is a bijection to
${\mathcal P}^{(t)}_{X}(g, h)$
for every $g\in G$,
where $[c+t]\in\{0,1\}$ satisfies
$[c+t]\equiv c+t( {\rm mod} 2)$.
\end{lem}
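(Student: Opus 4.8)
The plan is to establish the bijection through the standard unique path-lifting property of covering maps, which in our discrete setting is already encoded in property \ref{prop:uniqlift} of Lemma \ref{lem:doublecover}. The essential point is that for each vertex $x$ of $Y(G,S)$ the map $\pi_E|_{E_x}$ is a bijection from the edges leaving $x$ onto the edges leaving $\pi_V(x)$, which lets us lift a path in $X(G,S)$ edge by edge in a unique way. Well-definedness of the restriction is immediate: if $p$ is a path in $Y(G,S)$ from $(g,c)$ to $(h,[c+t])$ of length $t$, then since $\pi$ is a graph morphism (property 1 of Lemma \ref{lem:doublecover}) and $\pi_P$ preserves length, $\pi_P(p)$ is a path of length $t$ from $\pi_V(g,c)=g$ to $\pi_V(h,[c+t])=h$, hence lies in ${\mathcal P}_X^{(t)}(g,h)$.

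First I would prove the lifting claim by induction on $t$: given any path $q=(\epsilon_1,\ldots,\epsilon_t)\in {\mathcal P}_X^{(t)}(g,h)$ and any fixed starting vertex $(g,c)$ in the fiber $\pi_V^{-1}(g)$, there is a unique path $p=(e_1,\ldots,e_t)$ in $Y(G,S)$ with $o(e_1)=(g,c)$ and $\pi_P(p)=q$. For $t=0$ this is trivial. For the inductive step, suppose the first $t-1$ edges have been lifted uniquely to a path terminating at a vertex $x$ with $\pi_V(x)=o(\epsilon_t)$; since $\epsilon_t\in E_{\pi_V(x)}$, property \ref{prop:uniqlift} supplies a unique edge $e_t\in E_x$ with $\pi_E(e_t)=\epsilon_t$, which both extends the lift and witnesses its uniqueness.

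Next I would identify the terminal vertex of the lift. By the morphism property we have $\pi_V(t(e_t))=t(\epsilon_t)=h$, so the first coordinate of the endpoint is $h$. Because $Y(G,S)$ is bipartite with parts $G\times\{0\}$ and $G\times\{1\}$ and every edge crosses between them, a path of length $t$ beginning in the part indexed by $c$ ends in the part indexed by $c+t \bmod 2 = [c+t]$. Hence the endpoint is exactly $(h,[c+t])$, and the unique lift $p$ indeed lies in ${\mathcal P}_Y^{(t)}((g,c),(h,[c+t]))$.

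Combining the two steps yields the lemma: surjectivity of the restriction of $\pi_P$ holds because every $q\in {\mathcal P}_X^{(t)}(g,h)$ admits a lift starting at $(g,c)$, and that lift automatically terminates at $(h,[c+t])$; injectivity follows from the uniqueness of the lift. I do not expect a serious obstacle here, since the whole content is the unique lifting property of the double cover. The only point requiring care is the bookkeeping of the second coordinate, namely confirming that the parity of $t$ forces the lift to terminate in the fiber component $[c+t]$ rather than the other one; this is precisely where the bipartite structure of $Y(G,S)$ is used.
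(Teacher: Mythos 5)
Your proof is correct and takes essentially the same route as the paper's: both amount to unique path lifting in the double cover $Y(G,S)\to X(G,S)$, with the second coordinate of the endpoint forced to $[c+t]$ by the fact that every edge of $Y(G,S)$ switches the $C_2$-component. The only difference is presentational: the paper constructs the lift explicitly as the path labeled by the generator sequence $((s_1,1),\ldots,(s_t,1))$, so that existence, uniqueness, and the endpoint parity are immediate from the group law, whereas you obtain the same facts abstractly by edge-by-edge induction from property \ref{prop:uniqlift} of Lemma \ref{lem:doublecover} together with the bipartite structure, thereby spelling out what the paper compresses into ``it is clear.''
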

\begin{proof}
 Let $p=(e_1, e_2, \ldots, e_t)$
 be a path in $X(G,S)$, which corresponds to a sequence
 $(s_1, s_2, \ldots, s_t)$ of the elements of $S$. 
 That is, if $o(e_i)=g$ and $t(e_i)=h$,
 then $h=gs_i$.
 Then there exists a unique
 path $\bar{p}$ in $Y(G,S)$ 
 which starts from the vertex $(g,c)$
 and corresponds to the sequence
 $((s_1,1),(s_2,1),\ldots,(s_t,1))$ of the
 elements of $S\times\{1\}$.
 It is clear that this $\bar{p}$ is the unique path in $Y(G,S)$
 of length $t$
 such that $\bar{p}$ starts from $(g,c)$
 and $\pi_P(\bar{p})=p$.
 \end{proof}

\begin{lem}
 \label{lem:projection}
 Let $S$ be a generating set of a finite
 group $G$ and suppose that $S\times\{1\}$
 generates the direct product $G\times C_2$.
 Let $(\mu^{(0)}, \mu^{(1)},\ldots)$ be the
 simple random walk on $X(G,S)$
 starting from the identity element ${\rm id}$ of $G$,
 and
 let $(\nu^{(0)}, \nu^{(1)},\ldots)$ be
 the simple random walk on $Y(G, S)$
 starting from the identity element $({\rm id},0)\in G\times C_2$.
 %Then, $\mu^{(t)}$ is the projection of $\nu^{(t)}$, that is, 
 %\begin{equation}\label{eq:projection}
 % \mu^{(t)}(g) = \nu^{(t)}(g,0) + \nu^{(t)}(g,1).
 %\end{equation}
 %for all $g\in G$ and $t\geq 0$.
 %The random walk $(\nu^{(t)})$ has the period $2$, that is,
 %for all $g\in G$, where $[t]\in \{0,1\}$ is defined by $[t]\equiv t({\rm mod} 2)$.
 %if $t\equiv 0({\rm mod} 2)$ (resp. $t\equiv 1({\rm mod} 2)$) then $\nu^{(t)}(g,1)=0$ 
 %(resp. $\nu^{(t)}(g,0)=0$ )
 Then,
 \begin{equation}\label{eq:projection2}
  \mu^{(t)}(g) = \nu^{(t)}(g,[t]),
 \end{equation}
 and therefore
 \begin{equation*}%\label{eq:period2}
  \nu^{(t)}(g,[t+1]) = 0.
 \end{equation*}
\end{lem}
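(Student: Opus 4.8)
The plan is to reduce both probability measures to normalized counts of walks and then transport those counts across the double cover using the bijection already established in Lemma~\ref{lem:pathbijection}. Observe first that both $X(G,S)$ and $Y(G,S)$ are $d$-regular as bi-directed graphs, where $d=|S|$: every vertex of $X(G,S)$ has exactly $d$ outgoing edges, one for each $k\in S$, and likewise every vertex of $Y(G,S)$ has exactly $d$ outgoing edges, one for each $(k,1)\in S\times\{1\}$. Consequently, unfolding the recursion for $\nu^{(t)}$ exactly as was done for $\mu^{(t)}$ yields the path-counting expressions $\nu^{(t)}(g,c)=d^{-t}\,\big|{\mathcal P}_Y^{(t)}(({\rm id},0),(g,c))\big|$ and $\mu^{(t)}(g)=d^{-t}\,\big|{\mathcal P}_X^{(t)}({\rm id},g)\big|$. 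Thus the whole statement becomes an identity between cardinalities of path sets.

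For the first equality, I would instantiate Lemma~\ref{lem:pathbijection} at the source $(g,c)=({\rm id},0)$ and an arbitrary target element $g\in G$. The lemma gives that $\pi_P$ restricts to a bijection from ${\mathcal P}_Y^{(t)}(({\rm id},0),(g,[t]))$ onto ${\mathcal P}_X^{(t)}({\rm id},g)$, so the two sets have equal cardinality. Dividing by $d^t$ yields $\nu^{(t)}(g,[t])=\mu^{(t)}(g)$, which is exactly $(\ref{eq:projection2})$.

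The second equality is a parity observation exploiting that $Y(G,S)$ is bipartite with parts $G\times\{0\}$ and $G\times\{1\}$. Every edge of $Y(G,S)$ flips the $C_2$-coordinate, so a walk of length $t$ starting at $({\rm id},0)$ necessarily lands on a vertex whose second coordinate equals $[t]$. Hence ${\mathcal P}_Y^{(t)}(({\rm id},0),(g,[t+1]))$ is empty, and $\nu^{(t)}(g,[t+1])=0$.

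I expect no serious obstacle here: the substantive content has already been isolated in Lemmas~\ref{lem:doublecover} and~\ref{lem:pathbijection}, and this statement is essentially their bookkeeping consequence. The only points requiring care are verifying that $Y(G,S)$ is genuinely $d$-regular, so that the same normalization $d^{-t}$ applies to both walks, and matching the parity index $[c+t]$ appearing in Lemma~\ref{lem:pathbijection} with the starting coordinate $c=0$. The generating hypotheses on $S$ and $S\times\{1\}$ are not needed for the identity itself; they merely guarantee that $Y(G,S)$ is a connected double cover, which is the setting in which the lemma is subsequently applied.
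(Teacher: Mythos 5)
Your proposal is correct and follows essentially the same route as the paper: the paper likewise expresses both measures as $d^{-t}$ times path counts and cites Lemma~\ref{lem:pathbijection} at the base point $({\rm id},0)$ to get $(\ref{eq:projection2})$. Your explicit bipartite-parity argument for $\nu^{(t)}(g,[t+1])=0$ just spells out what the paper leaves implicit in its ``therefore,'' so there is nothing to correct.
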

\begin{proof}
It is clear that
\[
 \mu^{(t)}(g) = \frac{1}{d^t}\left|{\mathcal P}^{(t)}_{X}({\rm id},g)\right|,
 \mbox{~~ ~~}
 \nu^{(t)}(g,[t]) = \frac{1}{d^t}\left|{\mathcal P}_{Y}^{(t)}(({\rm id},0),(g,[t]))\right|.
\]
From Lemma $\ref{lem:pathbijection}$, $(\ref{eq:projection2})$ follows.
%When $t=0$, since
%\[
% \nu^{(0)}(g,c)
% = \begin{cases}
%    1 & (g,c) = ({\rm id},0),\\
%    0 & (g,c) \neq ({\rm id},0),
%   \end{cases}
%\]
% we have $\mu^{(0)}(g) = \nu^{(0)}(g,0) + \nu^{(0)}(g,1)$
% for all $g\in G$.
% Suppose that $(\ref{eq:projection})$
% holds for $t\leq s$. Then
% we have
% \begin{eqnarray*}
%  \mu^{(s+1)}(g) & = &
%   \frac{1}{d}\sum_{k\in S}\mu^{(s)}(gk^{-1})\\
%  & = & \frac{1}{d}\sum_{k\in S}\left(\nu^{(s)}(gk^{-1},0) + \nu^{(s)}(gk^{-1},1)\right)\\
%  & = & \frac{1}{d}\sum_{k\in S}\nu^{(s)}(gk^{-1},0) + \frac{1}{d}\sum_{k\in S}\nu^{(s)}(gk^{-1},1)\\
%  & = & \nu^{(s+1)}(g,1) + \nu^{(s+1)}(g,0).
% \end{eqnarray*}
% Therefore $(\ref{eq:projection})$ holds for all $t\geq 0$.
% The equality $(\ref{eq:period2})$ follows from the recursive relation of the
% simple random walk on $G\times C_2$,
% \[
% \nu^{(t+1)}(g,c)=\frac{1}{d}\sum_{k\in S}\nu^{(t)}(gk^{-1},c-1),
% \]
% where $c-1$ is taken modulo $2$ and is in $\{0,1\}$.
\end{proof}

By $(\ref{eq:covereq})$ $X(G, S_1)$ and $X(G, S_2)$
have a common double covering,
and there exists a graph isomorphism $\varphi=(\varphi_V, \varphi_E)$
from $Y(G,S_1)$ to $Y(G, S_2)$,
which satisfies $\varphi_V({\rm id}, 0)=({\rm id},0)$.
Then it is obvious that
\[
 \varphi_V\left(G\times\{0\}\right) = G\times\{0\},~~~
 \varphi_V\left(G\times\{1\}\right) = G\times\{1\},
\]
which induces two bijections $\varphi_0:G\to G$ and $\varphi_1:G\to G$ 
such that
\begin{equation}\label{eq:phi01}
 \varphi_V(g,0) = \left(\varphi_0(g),0\right),~~
 \varphi_V(g,1) = \left(\varphi_1(g),1\right). 
\end{equation}

\begin{thm}\label{thm:main}
 There exists a bijection between
 ${\mathcal P}^{(t)}_{X(G,S_1)}(g,h)$ and 
 ${\mathcal P}^{(t)}_{X(G,S_2)}(\varphi_0(g),\varphi_{[t]}(h))$
 and hence
 \begin{equation}\label{eq:main}
  \left|{\mathcal P}^{(t)}_{X(G,S_1)}(g,h)\right|
   =
   \left|{\mathcal P}^{(t)}_{X(G,S_2)}(\varphi_0(g),\varphi_{[t]}(h))\right|.
 \end{equation}
\end{thm}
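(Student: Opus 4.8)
The plan is to realize the desired bijection as a composition of three bijections, routing paths in $X(G,S_1)$ through the two double covers and the isomorphism connecting them. Write $\pi^{(1)}_P$ and $\pi^{(2)}_P$ for the path maps induced by the covering maps $Y(G,S_1)\to X(G,S_1)$ and $Y(G,S_2)\to X(G,S_2)$ supplied by Lemma $\ref{lem:doublecover}$.

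First I would invoke Lemma $\ref{lem:pathbijection}$ twice, each time with starting second coordinate $c=0$. Applied to $Y(G,S_1)$, it gives that $\pi^{(1)}_P$ restricts to a bijection from ${\mathcal P}^{(t)}_{Y(G,S_1)}((g,0),(h,[t]))$ onto ${\mathcal P}^{(t)}_{X(G,S_1)}(g,h)$. Applied to $Y(G,S_2)$ with starting vertex $(\varphi_0(g),0)$, it gives that $\pi^{(2)}_P$ restricts to a bijection from ${\mathcal P}^{(t)}_{Y(G,S_2)}((\varphi_0(g),0),(\varphi_{[t]}(h),[t]))$ onto ${\mathcal P}^{(t)}_{X(G,S_2)}(\varphi_0(g),\varphi_{[t]}(h))$, since a length-$t$ path out of the vertex $(\varphi_0(g),0)$ in the bipartite graph $Y(G,S_2)$ must terminate in the part indexed by $[0+t]=[t]$.

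Second I would transport paths across the isomorphism $\varphi=(\varphi_V,\varphi_E)$ from $Y(G,S_1)$ to $Y(G,S_2)$. Being a directed graph isomorphism, $\varphi_E$ maps directed paths to directed paths bijectively and preserves length, so it restricts to a bijection between ${\mathcal P}^{(t)}_{Y(G,S_1)}((g,0),(h,[t]))$ and ${\mathcal P}^{(t)}_{Y(G,S_2)}(\varphi_V(g,0),\varphi_V(h,[t]))$. The one point to check is that the endpoints land where the second lifting bijection expects them: by $(\ref{eq:phi01})$ we have $\varphi_V(g,0)=(\varphi_0(g),0)$, and $\varphi_V(h,[t])=(\varphi_{[t]}(h),[t])$ in both cases $[t]=0$ and $[t]=1$, which matches the source and target of $\pi^{(2)}_P$ above. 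Here I rely on the already-noted fact that $\varphi_V$ preserves the bipartition, sending $G\times\{0\}$ to $G\times\{0\}$ and $G\times\{1\}$ to $G\times\{1\}$.

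Finally I would compose the three bijections: lift a path in $X(G,S_1)$ to $Y(G,S_1)$ by the inverse of the first restricted $\pi^{(1)}_P$, transport it by $\varphi_E$ to $Y(G,S_2)$, and project it down by $\pi^{(2)}_P$ to $X(G,S_2)$. Each arrow is a bijection onto the indicated set, so the composite is a bijection from ${\mathcal P}^{(t)}_{X(G,S_1)}(g,h)$ to ${\mathcal P}^{(t)}_{X(G,S_2)}(\varphi_0(g),\varphi_{[t]}(h))$, and $(\ref{eq:main})$ follows upon taking cardinalities. There is no serious obstacle here; once the two lifting bijections and the isomorphism are in hand, the construction is forced. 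The only place demanding a moment's care is the bookkeeping of the second coordinate — confirming that $c=0$ at the start pins the endpoint parity to $[t]$ on both covers, and that $\varphi_V$ carries the endpoint $(h,[t])$ to $(\varphi_{[t]}(h),[t])$ without mixing the two sheets.
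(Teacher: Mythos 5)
Your proposal is correct and follows essentially the same route as the paper's own proof: lift via Lemma \ref{lem:pathbijection} from $X(G,S_1)$ to $Y(G,S_1)$, transport across the isomorphism $\varphi$ using $(\ref{eq:phi01})$, and project back down by Lemma \ref{lem:pathbijection}, then compose the three bijections. Your extra care with the parity bookkeeping of the second coordinate is a welcome elaboration of a step the paper treats implicitly, but it is not a different argument.
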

\begin{proof}
 By Lemma $\ref{lem:pathbijection}$,
 there is a bijection between
 ${\mathcal P}^{(t)}_{X(G,S_1)}(g,h)$
 and
 ${\mathcal P}^{(t)}_{Y(G,S_1)}((g,0),(h,[t]))$.
 Since the condition $(\ref{eq:covereq})$ is satisfied, there exists a 
 graph isomorphism $\varphi: Y(G,S_1)\to Y(G,S_2)$.
 Let $\varphi_0$ and $\varphi_1$ be defined by $(\ref{eq:phi01})$.
 Then, we obtain the bijection
 \[
  {\mathcal P}^{(t)}_{Y(G,S_1)}((g,0),(h,[t]))
 \to
  {\mathcal P}^{(t)}_{Y(G,S_2)}(\varphi(g,0),\varphi(h,[t]))
 =
  {\mathcal P}^{(t)}_{Y(G,S_2)}((\varphi_0(g),0),(\varphi_{[t]}(h),[t])).
 \]
 By applying Lemma $\ref{lem:pathbijection}$,
 we obtain the bijection,
 \[
  {\mathcal P}^{(t)}_{Y(G,S_2)}((\varphi_0(g),0),(\varphi_{[t]}(h),[t]))
 \to
  {\mathcal P}^{(t)}_{X(G,S_2)}(\varphi_0(g),\varphi_{[t]}(h)).
 \]
 By composing these three bijections, we obtain a bijection
 \[
 {\mathcal P}^{(t)}_{X(G,S_1)}(g,h)\to{\mathcal P}^{(t)}_{X(G,S_2)}(\varphi_0(g),\varphi_{[t]}(h)).
 \]
\end{proof}

\begin{cor}
 Let $(\mu_i^{(0)},\mu_i^{(1)},\ldots )$ be the simple random
 walk on the Cayley graph $X(G,S_i)$ for $i=1,2$.
 Then, 
 \begin{equation}\label{eq:eqdist}
  \mu^{(t)}_1(g) = \mu_2^{(t)}(\varphi_{[t]}(g))
 \end{equation}
 for all $g\in G$ and $t\geq 0$, and
 \[
 {\rm d}_{TV}(\mu_1^{(t)}, U_G) = {\rm d}_{TV}(\mu_2^{(t)}, U_G),
 \]
 where $U_G$ is the uniform probability measure over $G$.
\end{cor}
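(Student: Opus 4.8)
The plan is to read off the pointwise identity \eqref{eq:eqdist} directly from the path-counting bijection of Theorem~\ref{thm:main}, specialized to a walk that starts at the identity, and then to deduce the equality of total variation distances by reindexing the defining sum using the bijectivity of $\varphi_{[t]}$ together with the permutation-invariance of the uniform measure. Since the heavy lifting is already contained in Theorem~\ref{thm:main}, the corollary is essentially a bookkeeping step.

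First I would recall that the simple random walk on each $X(G,S_i)$ starts at ${\rm id}$, so by Lemma~\ref{lem:projection} (equivalently, by the path expression for $\mu^{(t)}$) we have
$$
\mu_i^{(t)}(g) = \frac{1}{d^t}\left|{\mathcal P}^{(t)}_{X(G,S_i)}({\rm id},g)\right|,
$$
where $d=|S_1|=|S_2|$ is the common out-degree; the two degrees agree because $X(G,S_1)$ and $X(G,S_2)$ share the isomorphic double cover $Y(G,S_1)\cong Y(G,S_2)$, and the bijection of Theorem~\ref{thm:main} preserves path length, so the identical normalization $d^{-t}$ occurs on both sides. Applying \eqref{eq:main} with the source vertex taken to be ${\rm id}$ and the target vertex taken to be $g$ gives
$$
\left|{\mathcal P}^{(t)}_{X(G,S_1)}({\rm id},g)\right|
= \left|{\mathcal P}^{(t)}_{X(G,S_2)}(\varphi_0({\rm id}),\varphi_{[t]}(g))\right|.
$$
The one point that needs attention is the normalization $\varphi_V({\rm id},0)=({\rm id},0)$ of the isomorphism $\varphi$, which by \eqref{eq:phi01} forces $\varphi_0({\rm id})={\rm id}$. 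Substituting this into the display and reusing the path expression for $\mu_2^{(t)}$ yields
$$
\mu_1^{(t)}(g) = \frac{1}{d^t}\left|{\mathcal P}^{(t)}_{X(G,S_2)}({\rm id},\varphi_{[t]}(g))\right| = \mu_2^{(t)}(\varphi_{[t]}(g)),
$$
which is exactly \eqref{eq:eqdist}.

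For the statement about total variation distance I would substitute \eqref{eq:eqdist} into the definition of ${\rm d}_{TV}$ and use that $U_G(g)=1/|G|=U_G(\varphi_{[t]}(g))$ for every $g$, so that
$$
{\rm d}_{TV}(\mu_1^{(t)}, U_G)
= \frac{1}{2}\sum_{g\in G}\left|\mu_1^{(t)}(g)-U_G(g)\right|
= \frac{1}{2}\sum_{g\in G}\left|\mu_2^{(t)}(\varphi_{[t]}(g))-U_G(\varphi_{[t]}(g))\right|.
$$
Because $\varphi_{[t]}$ is a bijection of $G$, the substitution $h=\varphi_{[t]}(g)$ reindexes the last sum over all of $G$ and turns the right-hand side into $\frac{1}{2}\sum_{h\in G}\left|\mu_2^{(t)}(h)-U_G(h)\right| = {\rm d}_{TV}(\mu_2^{(t)}, U_G)$, as required. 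There is no genuine obstacle here beyond the two observations just flagged, namely the fixed-point normalization $\varphi_0({\rm id})={\rm id}$ and the equality of the step-normalizations $d$; all the substantive combinatorics was already discharged in Theorem~\ref{thm:main}.
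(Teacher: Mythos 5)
Your proposal is correct and follows essentially the same route as the paper's own proof: apply Theorem~\ref{thm:main} with source ${\rm id}$, use the normalization $\varphi_0({\rm id})={\rm id}$ to obtain \eqref{eq:eqdist} from the path-count expressions $\mu_i^{(t)}(g)=d^{-t}\bigl|{\mathcal P}^{(t)}_{X(G,S_i)}({\rm id},g)\bigr|$, and then reindex the total variation sum via the bijection $\varphi_{[t]}$. Your extra remark that $d=|S_1|=|S_2|$ (immediate here since both sets have three elements by construction) is a point the paper leaves implicit, but it changes nothing of substance.
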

\begin{proof}
 Since we suppose  the random walks start from the identity element ${\rm id}\in G$,
 \[
 \mu^{(t)}_1(g) = \frac{1}{d^t}\left|{\mathcal P}^{(t)}_{X(G,S_1)}({\rm id},g)\right|,
 ~~~
 \mu^{(t)}_2(\varphi_{[t]}(g)) = \frac{1}{d^t}\left|{\mathcal P}^{(t)}_{X(G,S_1)}({\rm id},\varphi_{[t]}(g))\right|.
 \]
 Since $\varphi_0({\rm id})={\rm id}$, Theorem $\ref{thm:main}$ implies
 \[
  \left|{\mathcal P}^{(t)}_{X(G,S_1)}({\rm id},g)\right| = 
 \left|{\mathcal P}^{(t)}_{X(G,S_1)}({\rm id},\varphi_{[t]}(g))\right|.
 \]
 Thus we obtain $(\ref{eq:eqdist})$.
 \begin{eqnarray*}
  {\rm d}_{TV}(\mu_1^{(t)},U_G) & = &
   \frac{1}{2}\sum_{g\in G}\left|\mu_1^{(t)}(g)-\frac{1}{|G|}\right|
   =  
   \frac{1}{2}\sum_{g\in G}\left|\mu_2^{(t)}\left(\varphi_{[t]}(g)\right)-\frac{1}{|G|}\right|\\
  & = & 
   \frac{1}{2}\sum_{g\in G}\left|\mu_2^{(t)}(g)-\frac{1}{|G|}\right|
   = 
  {\rm d}_{TV}(\mu_2^{(t)},U_G).
 \end{eqnarray*}
\end{proof}

\section{Spectral structure}
This section is about the properties ${\bf \ref{cond:nonisomorphic}}$ and ${\bf \ref{cond:spectral}}$.
Recall from Theorem $\ref{thm:existence}$ that ${\rm puz}_0(\theta_{a,b})$
stands for a connected component of ${\rm puz}(\theta_{a,b})$,
whose path contraction $\widetilde{{\rm puz}}_0(\theta_{a,b})$
is isomorphic to $Y(G, S_i)$ where $G$ is as in $(\ref{eq:Gcases})$,
and the generating sets $S_1$ and $S_2$ have been defined by $(\ref{eq:S1})$ and $(\ref{eq:S2})$ respectively. 
We apply the theory of the zeta functions of the finite graph coverings
to show that the Cayley graphs
$X(G, S_1)$ and $X(G, S_2)$ have
the properties 
${\bf \ref{cond:nonisomorphic}}$ and 
${\bf \ref{cond:spectral}}$.
The readers who are not familiar with the
theory of the zeta functions of the finite graphs are
referred to \cite{terras2010zeta}.
A remark is in order here.
\begin{rmk}
 \label{rmk:exception}
We will introduce the zeta function
and $L$-function of a finite graph $\Gamma$,
which is considered to be a {\em bi-directed} graph.
That is, every edge $e$ of $\Gamma$ is directed and
there exists a unique edge $\overline{e}$, the
{\em reverse} of $e$.
Among all the Cayley graphs of the forms $X(G, S)$ and $Y(G,S)$,
%which we deal with in this paper,
the only exception which does not fit into this formulation 
is our running example $\widetilde{{\rm puz}}_0(\theta_{1,0})$
since $X({\mathfrak S}_3, S_2)$ with $S_2=\{(1,2),(2,3),{\rm id}\}$ 
has loops and the loops does not have their reverses.
Nevertheless the statements of our main results Theorem $\ref{thm:spectra1}$ 
and $\ref{thm:spectra2}$ perfectly hold for this only exception
(shown in Figure $\ref{fig:theta10covering}$).
%The authors do not know  how the theory of zeta functions 
%would be modified if we do not allow the reverses of the loops.
\end{rmk}

As we have explained in the previous section,
the automorphism group ${\rm Aut}(\theta_{a,b})$
contains a subgroup 
$K=\left\{{\rm id}, \rho, \psi, \rho\psi\right\}$
isomorphic to Klein four-group, which 
acts also on $Y=\widetilde{{\rm puz}}_0(\theta_{a,b})$
from the right 
as a subgroup of the automorphism group of $Y$,
namely we can regard as
\[
 K\subset{\rm Aut}(Y).
\]
The group $K$ has three non-trivial subgroup
$\langle\rho\rangle, \langle\psi\rangle$, and $\langle\rho\psi\rangle$.
Let $Y=\widetilde{{\rm puz}}_0(\theta_{a,b})=(V_Y, E_Y)$, where
$V_Y$ is the vertex set of  $Y$ and
$E_Y$ is the set of (directed) edges of $Y$.
For each subgroup $H$ of $K$,
we obtain the {\em quotient graph} $X_H=Y/H$ as follows.
The subgroup acts on $V_Y$ and $E_Y$ from the right.
The vertex set of $X_H$ is the set of $H$-orbits $V_Y/H$,
and the edge set of $X_H$ is the set of $H$-orbits $E_Y/H$.
Thus we obtain the natural covering map $\pi=(\pi_V, \pi_E)$
consisting of
\[
 \pi_V: V_Y \ni v \mapsto vH \in V_{X_H}=V_Y/H,~~~
 \pi_E: E_Y \ni e \mapsto eH \in E_{X_H}=E_Y/H.
\]
Then this {\em covering} is {\em normal} (or Galois),
that is,
$H$ acts transitively and freely on each fiber $\pi^{-1}_V(v)$
(and also on $\pi^{-1}_E(e)$).
When a graph $\Gamma$ is a normal cover of a graph $\Delta$,
the covering transformation group (or the Galois group) of $\Gamma/\Delta$
is denoted ${\rm Gal}(\Gamma/\Delta)$.
Thus we have
\[
 H = {\rm Gal}(Y/X_H).
\]

Let $Z=Y/K$.
Then ${\rm Gal}(Y/Z)=K$ and by applying Theorem 14.3 of \cite{terras2010zeta},
we have three intermediate coverings of $Y/Z$,
\[
 X_{\langle\rho\rangle} = Y/\langle\rho\rangle,~~~
 X_{\langle\psi\rangle} = Y/\langle\psi\rangle,~~~
 X_{\langle\rho\psi\rangle} = Y/\langle\rho\psi\rangle,
\]
each of which is a double (and therefore Galois)
covering of $Z$. Figure \ref{fig:galoiscovering} illustrates this covering
relation.
\begin{center}
 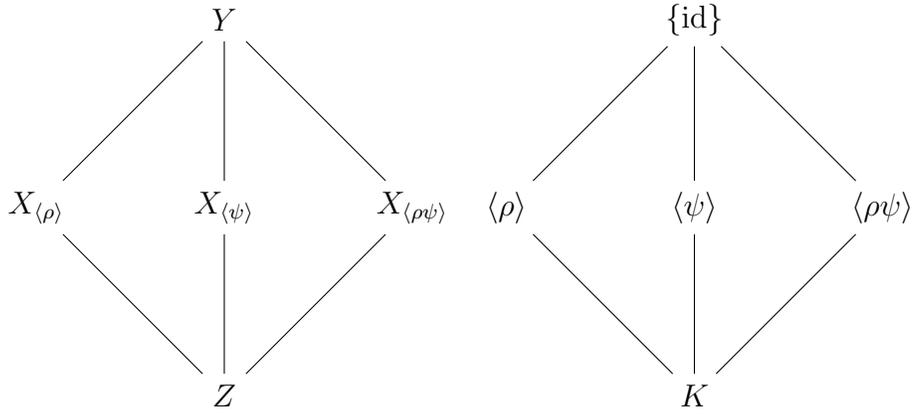
\begin{figure}[H]
  \begin{center}
   \begin{tikzpicture}[scale=2.5]
    \node[] at (0,0) (Z){$Z$} ;
    \node[] at (-1,1) (X1){$X_{\langle\rho\rangle}$} ;
    \node[] at (0,1) (X2){$X_{\langle\psi\rangle}$} ;
    \node[] at (1,1) (X3){$X_{\langle\rho\psi\rangle}$} ;
    \node[] at (0,2) (Y){$Y$} ;
    \draw (Z) to (X1);
    \draw (Z) to (X2);
    \draw (Z) to (X3);
    \draw (Y) to (X1);
    \draw (Y) to (X2);
    \draw (Y) to (X3);
    \begin{scope}[xshift=2.5cm]
    \node[] at (0,0) (K){$K$} ;
    \node[] at (-1,1) (G1){$\langle\rho\rangle$} ;
    \node[] at (0,1) (G2){$\langle\psi\rangle$} ;
    \node[] at (1,1) (G3){$\langle\rho\psi\rangle$} ;
    \node[] at (0,2) (id){$\{\rm id\}$} ;
    \draw (K) to (G1);
    \draw (K) to (G2);
    \draw (K) to (G3);
    \draw (id) to (G1);
    \draw (id) to (G2);
    \draw (id) to (G3);
    \end{scope}
   \end{tikzpicture}
  \end{center}
  \caption{Covering relation of $\widetilde{{\rm puz}}(\theta_{a,b})$, $X(G, S_1)$,
  $X(G, S_2)$, and $X_3, Z$
  }
  \label{fig:galoiscovering}
 \end{figure}
\end{center}

Note that we have
\[
 X_{\langle\rho\rangle} = X(G,S_1),~~~
 X_{\langle\psi\rangle} = X(G,S_2).
\]
The following is an example of $X_{\langle\rho\psi\rangle}$.
\begin{exm}
 As we have seen in Example $\ref{exm:theta1b}$,
 $\widetilde{{\rm puz}}(\theta_{1,0})$ is isomorphic to
 the Cayley graph $Y({\mathfrak S}_3, S_1)$ where
 $S_1=\{\sigma=(1,3,2), \tau=(1,3)\}$.
 Then the vertex permutation $\rho\psi$ is the transposition
 just exchanging the vertices $v_1$ and $v_3$.
 The quotient graph $X_{\langle\rho\psi\rangle}$ and $X_K$
 are shown in Figure $\ref{fig:X3}$.
 \begin{center}
  \begin{figure}[H]
   \begin{center}
    \begin{tikzpicture}[scale=1.25]
     \small
    \node[inner sep=2.5](10) at (120:1.3) {$(\sigma^{2},0)$};
    \node[inner sep=2.5](21) at (60:1.3) {$(\sigma,1)$};
    \node[inner sep=2.5](30) at (0:1.3) {$({\rm id},0)$};
    \node[inner sep=2.5](11) at (300:1.3) {$(\sigma^{2},1)$};
    \node[inner sep=2.5](20) at (240:1.3) {$(\sigma,0)$};
    \node[inner sep=2.5](31) at (180:1.3) {$({\rm id},1)$};
    \node[inner sep=2.5](41) at (120:2.5) {$(\tau\sigma,1)$};
    \node[inner sep=2.5](50) at (60:2.5) {$(\sigma\tau,0)$};
    \node[inner sep=2.5](61) at (0:2.5) {$(\tau,1)$};
    \node[inner sep=2.5](40) at (300:2.5) {$(\tau\sigma,0)$};
    \node[inner sep=2.5](51) at (240:2.5) {$(\sigma\tau,1)$};
    \node[inner sep=2.5](60) at (180:2.5) {$(\tau,0)$};
     \draw[>=latex] (10)--(21);
     \draw[>=latex] (21)--(30);
     \draw[>=latex] (30)--(11);
     \draw[>=latex] (11)--(20);
     \draw[>=latex] (20)--(31);
     \draw[>=latex] (31)--(10);
     \draw[>=latex] (41)--(50);
     \draw[>=latex] (50)--(61);
     \draw[>=latex] (61)--(40);
     \draw[>=latex] (40)--(51);
     \draw[>=latex] (51)--(60);
     \draw[>=latex] (60)--(41);
     \draw[>=latex] (10)--(41);
     \draw[>=latex] (21)--(50);
     \draw[>=latex] (30)--(61);
     \draw[>=latex] (11)--(40);
     \draw[>=latex] (20)--(51);
     \draw[>=latex] (31)--(60);
     \begin{scope}[xshift=4cm, yshift=2cm]
      \node[](w1) at (0,1.5) {$(\{{\rm id},\tau\},0)$};
      \node[](w2) at (2,1.5) {$(\{\sigma,\sigma\tau\},0)$};
      \node[](w3) at (4,1.5) {$(\{\sigma^2,\tau\sigma\},0)$};
      \node[](b1) at (0,-1.5) {$(\{{\rm id},\tau\},1)$};
      \node[](b2) at (2,-1.5) {$(\{\sigma,\sigma\tau\},1)$};
      \node[](b3) at (4,-1.5) {$(\{\sigma^2,\tau\sigma\},1)$};
      \draw (w1) -- (b1);
      \draw (w1) -- (b2);
      \draw (w1) -- (b3);
      \draw (w2) -- (b1);
      \draw (w2) -- (b2);
      \draw (w2) -- (b3);
      \draw (w3) -- (b1);
      \draw (w3) -- (b2);
      \draw (w3) -- (b3);
     \end{scope}

     \begin{scope}[xshift=6cm, yshift=-2cm]
      \node[](z1) at (0:1.2) {$\{{\rm id},\tau\}$};
      \node[](z2) at (120:1.2) {$\{\sigma, \sigma\tau\}$};
      \node[](z3) at (240:1.2) {$\{\sigma^2, \tau\sigma\}$};
      \draw (z1) -- (z2) -- (z3) -- (z1);
      \path (z1) edge[loop, looseness=10, ->, >=latex, in=60, out=-60] (z1);
      \path (z2) edge[loop, looseness=5, ->, >=latex, in=180, out=60] (z2);
      \path (z3) edge[loop, looseness=5, ->, >=latex, in=300, out=180] (z3);
     \end{scope}
    \end{tikzpicture}
   \end{center}
   \caption{$Y({\mathfrak S}_3, S_1)\cong Y=\widetilde{{\rm puz}}(\theta_{1,0})$(left),
   $X_{\langle\rho\psi\rangle}$(upper right)
   and $Z$(lower right)
   }
   \label{fig:X3}
  \end{figure}
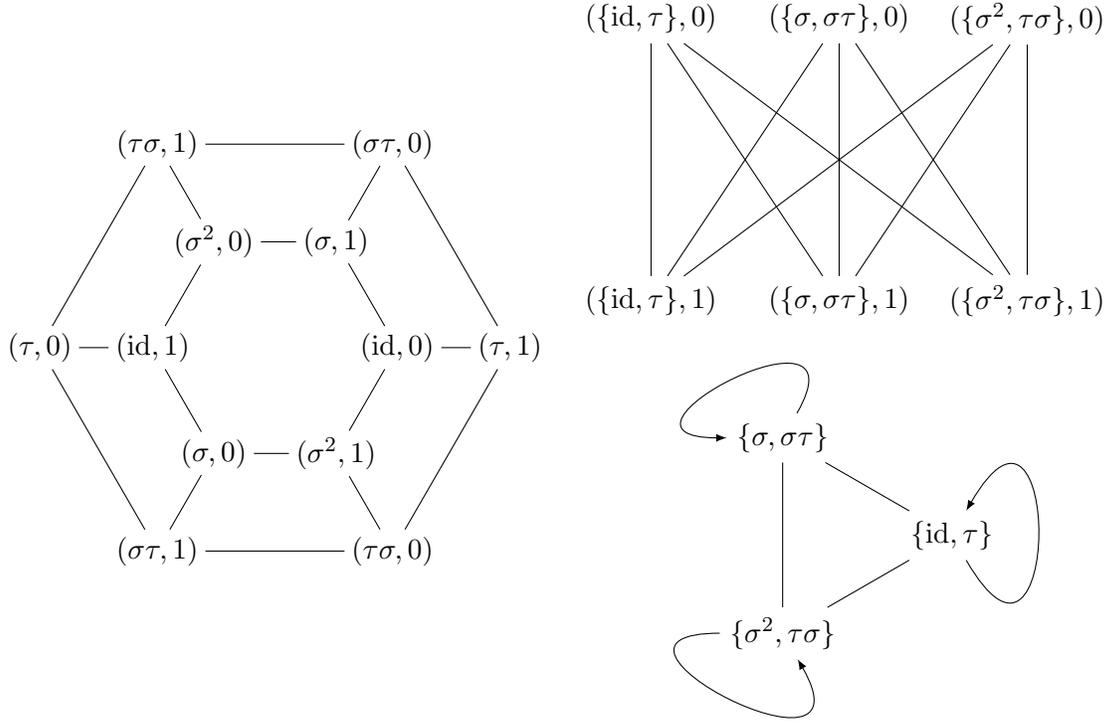
 \end{center}
\end{exm}

Let $C=(e_0,e_1,\ldots, e_{l-1})$ be a cycle in
a graph $\Gamma$.
Then $C$ is {\em non-backtracking} if
\[
 e_i\neq e_{i+1} 
\]
for every $i\in {\mathbb Z}/l{\mathbb Z}$.
The {\em length} $\nu(C)$ of a cycle $C$ is defined by
\[
 \nu(C) = l.
\]
A non-backtracking cycle $C$ is a {\em prime cycle} 
if there exists no pair of a cycle $D$ and an integer $k>1$ such that
\[
 C = D^k.
\]
Let $C=(e_0,e_1,\ldots, e_{l-1})$ be a cycle in a
graph $\Gamma$.
We  introduce an equivalence relation $\sim$ to the set of prime cycles
in $\Gamma$ as follows.
Let $C$ and $C'$ be two prime cycles in $\Gamma$.
Then we define that $C\sim C'$ if  there exists a $k\in {\mathbb Z}/l{\mathbb Z}$ such that
\[
 C' = (e_k, e_{k+1}, \ldots, e_{k+l-1}).
\]
That is, $C'$ is obtained from $C$ by changing the starting vertex.
This relation defines a equivalence relation on the set of all prime cycles in $\Gamma$,
and we denote the equivalence class $[C]$
to which $C$ belongs.
We call $[C]$ a {\em prime} in $\Gamma$.

The {\em zeta function} $\zeta_{\Gamma}$ of a graph 
$\Gamma$ is defined by
\[
 \zeta_{\Gamma}(u) = 
 \prod_{[C]}
 \left(1-u^{\nu(C)}\right)^{-1},
\]
where $[C]$ runs through the primes in $\Gamma$.
The following theorem shows how the zeta functions
are related to the spectra of $\Gamma$.
\begin{thm}
 \label{thm:zeta}
 {\rm (\cite{ihara1966discrete, bass1992ihara})}
 ~Let $A$ be the adjacency matrix of $\Gamma$, and
 let $Q$ be the diagonal matrix
 whose diagonal entry corresponding to the vertex $v$ is ${\rm deg}(v)-1$.
 Then we have
 \[
  \zeta_{\Gamma}(u)
 =
 \left(
 (1-u^2)^{r-1}
 \det\left(I - uA + u^2Q\right)\right)^{-1},
 \]
 where $r-1=\frac{1}{2}{\rm Tr}\left(Q-I\right)$.
\end{thm}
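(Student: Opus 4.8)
The plan is to pass through the \emph{edge adjacency matrix} (the non-backtracking, or Hashimoto, matrix) and then apply a block-determinant manipulation due to Bass. First I would index the $2|E|$ directed edges of the bi-directed graph $\Gamma$ and introduce the $2|E|\times 2|E|$ matrix $W$ with $W_{e,f}=1$ exactly when $t(e)=o(f)$ and $f\neq\overline{e}$, so that a walk recorded by $W$ never backtracks. The number $N_k$ of closed non-backtracking tailless walks of length $k$ then equals ${\rm Tr}(W^k)$, and expanding $\log\zeta_\Gamma$ through the Euler product yields the first key identity
\[
 \zeta_\Gamma(u)=\exp\sum_{k\ge1}\tfrac1k{\rm Tr}(W^k)u^k=\det\left(I-uW\right)^{-1}.
\]
The step that needs care here is the passage from the product over primes $[C]$ to the sum over all closed walks: one must account for the $\nu(C)$ cyclic representatives of each prime and its powers, i.e.\ carry out the standard logarithmic bookkeeping relating primes to the traces ${\rm Tr}(W^k)$.

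Next I would record the incidence identities linking $W$ to $A$ and $Q$. Writing $S$ and $T$ for the $|V|\times 2|E|$ start- and terminal-incidence matrices ($S_{v,e}=1$ iff $o(e)=v$, $T_{v,e}=1$ iff $t(e)=v$) and $J$ for the $2|E|\times2|E|$ involution exchanging each edge with its reverse, one checks directly
\[
 A=ST^{\top},\quad SS^{\top}=TT^{\top}=Q+I,\quad SJ=T,\quad J^2=I,\quad W=T^{\top}S-J.
\]

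The heart of the argument is the Bass determinant identity, which I would obtain by evaluating the determinant of the $(|V|+2|E|)\times(|V|+2|E|)$ block matrix $M=\begin{pmatrix}I&uS\\ T^{\top}&I+uJ\end{pmatrix}$ in two ways. Taking the Schur complement of the top-left block gives $\det M=\det(I+uJ-uT^{\top}S)=\det(I-uW)$. Taking the Schur complement of the bottom-right block instead gives $\det M=\det(I+uJ)\det\!\left(I-uS(I+uJ)^{-1}T^{\top}\right)$; here I use $\det(I+uJ)=(1-u^2)^{|E|}$ (each reversal pair contributes a $2\times2$ block of determinant $1-u^2$) and $(I+uJ)^{-1}=(1-u^2)^{-1}(I-uJ)$, together with $ST^{\top}=A$ and $SJT^{\top}=TT^{\top}=Q+I$, to reduce the Schur complement to $(1-u^2)^{-1}(I-uA+u^2Q)$. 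Equating the two evaluations yields
\[
 \det\left(I-uW\right)=(1-u^2)^{\,|E|-|V|}\det\left(I-uA+u^2Q\right).
\]

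Finally I would combine the two displayed identities and translate the exponent: since ${\rm Tr}(Q-I)=\sum_v(\deg(v)-2)=2|E|-2|V|$, the hypothesis $r-1=\tfrac12{\rm Tr}(Q-I)$ gives $r-1=|E|-|V|$, whence $\zeta_\Gamma(u)^{-1}=(1-u^2)^{r-1}\det(I-uA+u^2Q)$, as claimed. I expect the main obstacle to be the first step—establishing $\zeta_\Gamma(u)^{-1}=\det(I-uW)$ rigorously—since the Bass identity, though clever, is a mechanical Schur-complement computation once the incidence relations are in hand, whereas the prime-to-trace conversion requires careful combinatorial bookkeeping (and, for graphs with loops such as the running example, a separate check as flagged in Remark \ref{rmk:exception}).
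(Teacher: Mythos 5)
The paper offers no proof of this theorem at all: it is imported from the literature, with the citations to Ihara and Bass standing in for an argument, and is then only \emph{used} (via $(\ref{eq:zetacharacteristic})$, with $Q=2I$ in the $3$-regular case). So there is no internal proof to compare yours against; judged on its own, your proposal is a correct and essentially complete reconstruction of Bass's original determinant proof. All three ingredients check out. For $\zeta_\Gamma(u)^{-1}=\det(I-uW)$, the bookkeeping you flag works because every closed non-backtracking tailless walk of length $k$ with a marked starting edge is uniquely a power $C^{k/\nu(C)}$ of a prime cycle and each prime class contributes $\nu(C)$ marked walks, so ${\rm Tr}(W^k)=\sum_{\nu(C)\mid k}\nu(C)$ and the logarithm of the Euler product telescopes; since everything is a formal power series identity in $u$, no convergence issue arises. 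The incidence relations are immediate entrywise computations; note only that $W=T^{\top}S-J$ works because $t(e)=o(\overline{e})$ always, so subtracting $J$ deletes exactly the backtracking entries. Both Schur-complement evaluations of $M$ are legitimate (the blocks $I$ and $I+uJ$ are invertible for small $u$, and the resulting polynomial identity extends to all $u$), and the exponent translation ${\rm Tr}(Q-I)=2|E|-2|V|$, hence $r-1=|E|-|V|$, is right. Two small remarks. First, your $W$ encodes the condition $f\neq\overline{e}$; the paper's stated non-backtracking condition $e_i\neq e_{i+1}$ is vacuous for loop-free bi-directed graphs and is evidently a slip for $e_{i+1}\neq\overline{e_i}$, so you have silently used the convention the Ihara--Bass formula actually requires. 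Second, your closing caveat about loops is apt and matches Remark $\ref{rmk:exception}$: when $\overline{e}$ is undefined the matrices $J$ and $W$ are not well defined as above, which is precisely why the paper must treat $\widetilde{{\rm puz}}_0(\theta_{1,0})$ as an exception outside this formalism.
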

When we apply Theorem $\ref{thm:zeta}$ to $X_H = Y/H$,
we have $Q=2I$ and
\begin{eqnarray}
 \zeta_{X_H}(u) & = & (1-u^2)^{-|G|/|H|}u^{-2|G|/|H|}
 \det\left(\frac{2u^2+1}{u}I-A\right)^{-1}\\
 & = & (1-u^2)^{-|G|/|H|}u^{-2|G|/|H|} P_{X_H}\left(2u+\frac{1}{u}\right)^{-1},
 \label{eq:zetacharacteristic}
\end{eqnarray}
where $P_{\Gamma}(x)$ is the characteristic polynomial
of the adjacency matrix of a graph $\Gamma$.
We use this fact frequently to translate the relations
of the zeta functions into the relations of the characteristic polynomials.

If $\Gamma$ is a normal covering of a graph $\Delta$
where ${\rm Gal}(\Gamma/\Delta)$ is abelian,
we say  $\Gamma/\Delta$ is an {\em abelian} covering.
In what follows, we state theorems on Galois coverings
in \cite{terras2010zeta} in the forms restricted to the abelian cases.
Let $\Gamma/\Delta$ be an abelian covering with the
covering map $\pi=(\pi_V, \pi_E)$.
Let $C=(e_0, e_1, \ldots, e_{l-1})$ be a prime cycle of $\Delta$,
and $\widetilde{C}=(f_0,f_1,\ldots, f_{l-1})$ be a lift 
of $C$ in $\Gamma$, that is, $\widetilde{C}$ is a path in $\Gamma$
whose projection $\pi(\widetilde{C})$ onto $\Delta$ is $C$.
Let $o(\widetilde{C})$ be the starting vertex of $\widetilde{C}$ and
$t(\widetilde{C})$ the terminal vertex.
Then, since $\Gamma/\Delta$ is a normal covering, and
both $o(\widetilde{C})$ and $t(\widetilde{C})$ are projected onto the same vertex $v$, 
there exits a unique automorphism $g$ in ${\rm Gal}(\Gamma/\Delta)$ such
that $o(C)g = t(C)$. This unique automorphism $g$ is denoted
\[
\left(\frac{\Gamma/\Delta}{C}\right). 
\]
The following proposition corresponds to the parts $(1)$ and $(3)$
of Proposition $16.5$ of $\cite[p.137]{terras2010zeta}$
restricted to the abelian covering case.
\begin{prop}
The automorphism $\left(\frac{\Gamma/\Delta}{C}\right)$
is determined by the prime $[C]$ not depending on
the choice of $\widetilde{C}$.
\end{prop}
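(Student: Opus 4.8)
The plan is to establish two independent invariances: for a fixed cycle $C$, that the element $\left(\frac{\Gamma/\Delta}{C}\right)$ does not depend on which lift $\widetilde{C}$ is chosen; and that it is unchanged when $C$ is replaced by a cyclic rotation representing the same prime $[C]$. (Here I read the defining relation as $o(\widetilde{C})\,g=t(\widetilde{C})$.) Throughout I would lean on three structural facts about the normal covering $\Gamma/\Delta$: the Galois group ${\rm Gal}(\Gamma/\Delta)$ acts freely and transitively on each fiber; every deck transformation $h\in{\rm Gal}(\Gamma/\Delta)$ is a graph automorphism commuting with $\pi$, hence carries any lift of a path to another lift of the same path; and a lift of a given path is uniquely determined by its starting vertex (the local edge bijection of the covering, as in Lemma \ref{lem:doublecover} for the double-cover case, applied edge by edge).

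First I would fix $C$ and take two lifts $\widetilde{C},\widetilde{C}'$. Since $o(\widetilde{C})$ and $o(\widetilde{C}')$ lie in the same fiber $\pi_V^{-1}(o(C))$ and the action is transitive, there is $h\in{\rm Gal}(\Gamma/\Delta)$ with $o(\widetilde{C})h=o(\widetilde{C}')$. Applying $h$ to $\widetilde{C}$ produces a lift of $C$ starting at $o(\widetilde{C}')$, so by uniqueness of lifting it equals $\widetilde{C}'$; in particular $t(\widetilde{C}')=t(\widetilde{C})h$. Writing $g,g'$ for the two Frobenius elements, i.e. $o(\widetilde{C})g=t(\widetilde{C})$ and $o(\widetilde{C}')g'=t(\widetilde{C}')$, substitution yields $o(\widetilde{C})\,hg'=o(\widetilde{C})\,gh$, and freeness of the action forces $hg'=gh$, that is $g'=h^{-1}gh$.

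This is precisely where abelianness enters and is the conceptual crux of the argument: in a general Galois covering the relation $g'=h^{-1}gh$ only pins down the conjugacy class of the Frobenius, but because ${\rm Gal}(\Gamma/\Delta)$ is abelian, conjugation is trivial and $g'=g$. Hence $\left(\frac{\Gamma/\Delta}{C}\right)$ is independent of the chosen lift. Every other step is routine bookkeeping; this reduction is the only genuine obstacle.

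Finally, for independence from the choice of representative of the prime $[C]$, I would take $C'=(e_k,\ldots,e_{k+l-1})$ and build a lift of $C'$ from the lift $\widetilde{C}=(f_0,\ldots,f_{l-1})$ of $C$. By equivariance, $\widetilde{C}g$ is the lift of the second traversal of $C$ beginning at $t(\widetilde{C})=o(\widetilde{C})g$, so the concatenation is a lift of $C^2$ and equals $(f_0,\ldots,f_{l-1},f_0 g,\ldots,f_{l-1}g)$. Reading off the length-$l$ subpath starting at position $k$, namely $(f_k,\ldots,f_{l-1},f_0 g,\ldots,f_{k-1}g)$, gives a lift of $C'$ whose start is $t(f_{k-1})$ and whose end is $t(f_{k-1})g$; its Frobenius element is therefore again $g$. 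Combined with the lift-independence already proved, this shows $\left(\frac{\Gamma/\Delta}{C'}\right)=g=\left(\frac{\Gamma/\Delta}{C}\right)$, so the symbol depends only on the prime $[C]$, as claimed.
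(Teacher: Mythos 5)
Your proof is correct: the paper itself gives no argument for this proposition, deferring instead to parts (1) and (3) of Proposition 16.5 of \cite{terras2010zeta}, and your two-step argument---showing that a change of lift conjugates the Frobenius element (via unique path lifting and the free, transitive fiber action, yielding $g' = h^{-1}gh$), with abelianness of ${\rm Gal}(\Gamma/\Delta)$ collapsing the conjugacy class to a single element, then handling rotation by lifting $C^2$ and extracting the length-$l$ subpath starting at position $k$---is precisely the standard argument from that source restricted to the abelian case. You also correctly identify the conceptual crux: in a general Galois covering only the conjugacy class of $\left(\frac{\Gamma/\Delta}{C}\right)$ is well defined, and it is exactly the abelian hypothesis that promotes it to a well-defined group element.
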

We call the automorphism $\left(\frac{\Gamma/\Delta}{C}\right)$
the {\em Frobenius automorphism} of $C$
associated with the abelian covering $\Gamma/\Delta$.
Let $\Gamma/\Delta$ be an abelian covering, and
let $\chi$ be a character of ${\rm Gal}(\Gamma/\Delta)$.
Then the {\em Artin $L$-function} $L(u, \chi, \Gamma/\Delta)$ 
of $\Gamma/\Delta$ associated
with the character $\chi$ is defined by
\begin{equation}
 \label{eq:Lfundef}
 L(u, \chi, \Gamma/\Delta) = 
 \prod_{[C]}\left(1-\chi\left(\frac{\Gamma/\Delta}{C}\right)u^{\nu(C)}\right)^{-1},
\end{equation}
where $[C]$ in the product runs through the primes of $\Delta$.
The following proposition corresponds to 
Proposition 18.10 and Corollary 18.11 of
\cite[p.154--155]{terras2010zeta}.
\begin{prop}
 \label{prop:factorization}
 Let $\Gamma/\Delta$ be an abelian covering,
 and $\widetilde{\Delta}$ be
 an intermediate covering of $\Gamma/\Delta$.
 Then, a character $\widetilde{\chi}$ of ${\rm Gal}(\widetilde{\Delta}/\Delta)$
 can be lifted to the character ${\chi}$
 of ${\rm Gal}(\Gamma/\Delta)$ and we have
 \begin{equation}
  \label{eq:Lfun1}
  L(u,\chi,\Gamma/\Delta) =  L(u, \widetilde{\chi}, \widetilde{\Delta}/\Delta).
 \end{equation}
 The zeta function $\zeta_\Gamma$ is factorized into the products of
 $L$-functions:
 \begin{equation}
  \label{eq:zetafactorization}
  \zeta_\Gamma(u)
 =
 \prod_{\chi\in \widehat{{\rm Gal}(\Gamma/\Delta)}}
 L(u, \chi, \Gamma/\Delta),
 \end{equation}
where $\widehat{{\rm Gal}(\Gamma/\Delta)}$ is the
set of the characters of ${\rm Gal}(\Gamma/\Delta)$.
\end{prop}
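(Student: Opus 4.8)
The plan is to establish the two assertions separately, each resting on the functorial behaviour of the Frobenius automorphism together with elementary character theory of the finite abelian group $G := {\rm Gal}(\Gamma/\Delta)$.

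For the first assertion, I would set $H = {\rm Gal}(\Gamma/\widetilde{\Delta})$, so that $\widetilde{\Delta}/\Delta$ is itself an abelian covering with ${\rm Gal}(\widetilde{\Delta}/\Delta)\cong G/H$, and let $q\colon G\to G/H$ be the quotient map; a character $\widetilde{\chi}$ of $G/H$ lifts to $\chi = \widetilde{\chi}\circ q$. Since both $L$-functions in $(\ref{eq:Lfun1})$ are Euler products over the same set of primes $[C]$ of $\Delta$ with the same exponent $\nu(C)$, it suffices to check the compatibility
\[
 q\left(\frac{\Gamma/\Delta}{C}\right) = \frac{\widetilde{\Delta}/\Delta}{C}
\]
for every prime $[C]$ of $\Delta$. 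I would prove this by lifting: choose a lift $\widetilde{C}$ of $C$ to $\Gamma$ starting at a vertex $\widetilde{v}$, so its terminal vertex is $\widetilde{v}\cdot g$ with $g = \left(\frac{\Gamma/\Delta}{C}\right)$. Projecting $\widetilde{C}$ through $\Gamma\to\widetilde{\Delta}=\Gamma/H$ gives a lift of $C$ in $\widetilde{\Delta}$ whose endpoints are the $H$-orbits $\widetilde{v}H$ and $\widetilde{v}gH$; these differ by the action of $gH = q(g)$, which is the displayed identity. Applying $\widetilde{\chi}$ and using $\chi = \widetilde{\chi}\circ q$ then makes the two Euler products agree factor by factor.

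For the second assertion I would reorganize the Euler product for $\zeta_\Gamma$, taken over the primes $[\widetilde{C}]$ of $\Gamma$, according to which prime $[C]$ of $\Delta$ each lies over:
\[
 \zeta_\Gamma(u)^{-1}
 = \prod_{[\widetilde{C}]}\left(1 - u^{\nu(\widetilde{C})}\right)
 = \prod_{[C]} \prod_{[\widetilde{C}]\text{ over }[C]} \left(1 - u^{\nu(\widetilde{C})}\right).
\]
Fix a prime $[C]$ of $\Delta$ of length $\ell = \nu(C)$ and put $g = \left(\frac{\Gamma/\Delta}{C}\right)$, $f = {\rm ord}(g)$. The key geometric fact is that a lift of $C$ closes into a cycle of $\Gamma$ only after traversing $C$ exactly $f$ times, so every prime of $\Gamma$ over $[C]$ has length $f\ell$, and counting starting fibres shows there are exactly $|G|/f$ of them; hence the inner product is $\left(1 - u^{f\ell}\right)^{|G|/f}$. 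On the $L$-function side, collecting the factors at $[C]$ over all characters gives $\prod_{\chi\in\widehat{G}}\left(1 - \chi(g)u^{\ell}\right)$, and the elementary identity
\[
 \prod_{\chi\in\widehat{G}}\left(1 - \chi(g)T\right) = \left(1 - T^{f}\right)^{|G|/f},
\]
valid because the values $\chi(g)$ range over the $f$-th roots of unity, each attained $|G|/f$ times, matches the two sides. Multiplying over all $[C]$ and comparing with $\prod_{\chi}L(u,\chi,\Gamma/\Delta)^{-1}$ yields $(\ref{eq:zetafactorization})$.

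I expect the main obstacle to be the geometric cycle-counting behind the second part: carefully proving that the primes of $\Gamma$ lying over a given prime $[C]$ of $\Delta$ all have length $f\,\nu(C)$ and occur with multiplicity $|G|/f$, while respecting both the non-backtracking condition and the cyclic-rotation equivalence defining primes. Once this bookkeeping is secured, the remainder reduces to the functoriality of the Frobenius and the root-of-unity identity, both of which are routine.
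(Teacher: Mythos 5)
The paper does not actually prove this proposition: it imports it verbatim from Terras (Proposition 18.10 and Corollary 18.11 of \cite[p.154--155]{terras2010zeta}), so there is no in-paper argument to compare against, and your proposal should be judged as a self-contained proof of the cited result. On that footing it is correct, and it follows the classical ``splitting of primes'' route, exactly analogous to the factorization of the Dedekind zeta function of an abelian number field into Dirichlet $L$-functions: the first half (functoriality $q\left(\frac{\Gamma/\Delta}{C}\right)=\left(\frac{\widetilde{\Delta}/\Delta}{C}\right)$ under the quotient $q:G\to G/H$, checked by projecting a lift, which is legitimate since the Galois correspondence identifies $\widetilde{\Delta}$ with $\Gamma/H$) is essentially the same argument Terras gives for her Proposition 18.10, while your second half replaces her representation-theoretic derivation of the factorization (induct the trivial character from the trivial subgroup, so that $\zeta_\Gamma=\prod_\chi L(u,\chi)^{\deg\chi}$ with all degrees $1$ in the abelian case, resting on the determinant formulas for Artin--Ihara $L$-functions) by a direct Euler-product computation. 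Your route is more elementary and self-contained but shifts all the weight onto the cycle-counting lemma you flag: that every prime of $\Gamma$ over $[C]$ has length $f\nu(C)$ and that there are exactly $|G|/f$ of them. To secure that count you should argue via the free transitive right $G$-action on the $|G|$ based lifts of $C^f$ starting in the fiber over $o(C)$, noting that two based lifts determine the same prime of $\Gamma$ precisely when they differ by a rotation through a multiple of $\nu(C)$, i.e.\ by a power of the Frobenius $g$ --- here abelianness (or normality) is what makes $g$ independent of the chosen fiber point, and one must keep in mind that $C$ may revisit $o(C)$, so rotations by non-multiples of $\nu(C)$ must be excluded as lifts of a \emph{different} based representative of $[C]$, not of $C^f$ itself; one also checks that lifting and projecting preserve the non-backtracking condition including the wrap-around index $i\in\mathbb{Z}/l\mathbb{Z}$, and that the length-$f\nu(C)$ lift is primitive (a proper power upstairs would project to a contradiction with the minimality of $f$). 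With that bookkeeping done, your root-of-unity identity $\prod_{\chi\in\widehat{G}}\left(1-\chi(g)T\right)=\left(1-T^{f}\right)^{|G|/f}$ is exactly right (restriction $\widehat{G}\to\widehat{\langle g\rangle}$ is surjective with kernel of order $|G|/f$), and the two sides match prime by prime.
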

\begin{thm}
 \label{thm:spectra1}
Let $Y=\widetilde{{\rm puz}}(\theta_{a,b})$ with $(a,b)\neq (1,0)$.
Let $X_{\langle\rho\rangle}, X_{\langle\psi\rangle}, X_{\langle\rho\psi\rangle}$ and $Z$
be defined as above. Then
\[
 P_Y(u)P_Z(u)^2 = P_{X_{\langle\rho\rangle}}(u) P_{X_{\langle\psi\rangle}}(u) P_{X_{\langle\rho\psi\rangle}}(u)
\]
\end{thm}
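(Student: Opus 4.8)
The plan is to read the identity off from the factorization of $\zeta_Y$ into Artin $L$-functions for the Klein four-cover $Y/Z$, and then to translate the resulting zeta-function identity into characteristic polynomials by means of $(\ref{eq:zetacharacteristic})$.

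First I would apply Proposition $\ref{prop:factorization}$ to the abelian covering $Y/Z$, whose Galois group is ${\rm Gal}(Y/Z)=K$. Since $K$ is the Klein four-group, $\widehat{{\rm Gal}(Y/Z)}$ consists of four characters: the trivial character $\chi_0$ and three nontrivial characters $\chi_{\langle\rho\rangle},\chi_{\langle\psi\rangle},\chi_{\langle\rho\psi\rangle}$, where for each $H\in\{\langle\rho\rangle,\langle\psi\rangle,\langle\rho\psi\rangle\}$ the character $\chi_H$ is the unique nontrivial character of $K$ whose kernel contains $H$ (explicitly $\chi_{\langle\rho\rangle}(\rho)=1,\ \chi_{\langle\rho\rangle}(\psi)=-1$, and analogously for the other two). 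By $(\ref{eq:zetafactorization})$,
\[
 \zeta_Y(u)=L(u,\chi_0,Y/Z)\,L(u,\chi_{\langle\rho\rangle},Y/Z)\,L(u,\chi_{\langle\psi\rangle},Y/Z)\,L(u,\chi_{\langle\rho\psi\rangle},Y/Z),
\]
and the trivial character contributes $L(u,\chi_0,Y/Z)=\zeta_Z(u)$.

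Next I would match the three nontrivial $L$-functions with the three intermediate double covers. For each $H$ the quotient ${\rm Gal}(X_H/Z)=K/H$ has order two, and its nontrivial character $\widetilde\chi_H$ lifts to exactly the character $\chi_H$ of $K$ described above. Applying $(\ref{eq:zetafactorization})$ to $X_H/Z$ together with the lifting relation $(\ref{eq:Lfun1})$ gives
\[
 \zeta_{X_H}(u)=\zeta_Z(u)\,L(u,\widetilde\chi_H,X_H/Z)=\zeta_Z(u)\,L(u,\chi_H,Y/Z).
\]
Taking the product over the three subgroups and comparing with the factorization of $\zeta_Y$ yields the zeta-function identity
\[
 \zeta_Y(u)\,\zeta_Z(u)^2=\zeta_{X_{\langle\rho\rangle}}(u)\,\zeta_{X_{\langle\psi\rangle}}(u)\,\zeta_{X_{\langle\rho\psi\rangle}}(u),
\]
since both sides are equal to $\zeta_Z(u)^3\prod_H L(u,\chi_H,Y/Z)$.

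Finally I would convert this into the stated polynomial identity. All five graphs are $3$-regular, so $(\ref{eq:zetacharacteristic})$ applies with $Q=2I$; writing $m_\Gamma=|V_\Gamma|$ one has $\zeta_\Gamma(u)^{-1}=(1-u^2)^{m_\Gamma/2}u^{m_\Gamma}P_\Gamma(2u+u^{-1})$. Using $|V_Y|=2|G|$, $|V_{X_H}|=|G|$, and $|V_Z|=|G|/2$, the exponents of $(1-u^2)$ and of $u$ agree on the two sides (both carry $(1-u^2)^{-3|G|/2}u^{-3|G|}$) and cancel, leaving
\[
 P_Y(x)\,P_Z(x)^2=P_{X_{\langle\rho\rangle}}(x)\,P_{X_{\langle\psi\rangle}}(x)\,P_{X_{\langle\rho\psi\rangle}}(x),\qquad x=2u+u^{-1}.
\]
Because $u\mapsto 2u+u^{-1}$ takes infinitely many values, this is a polynomial identity in $x$, and renaming $x$ to $u$ gives the assertion. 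The step I expect to require the most care is purely bookkeeping: correctly identifying which nontrivial character of $K$ is the lift from each quotient $K/H$, so that each $\zeta_{X_H}$ is paired with the right $L$-factor, and tracking the exponents in $(\ref{eq:zetacharacteristic})$ so that the prefactors cancel exactly. The hypothesis $(a,b)\neq(1,0)$ is precisely what excludes the loop/reverse pathology of Remark $\ref{rmk:exception}$, guaranteeing that every graph involved is a bi-directed $3$-regular graph to which Theorem $\ref{thm:zeta}$ applies and that $4\mid|G|$, so the exponents above are integral.
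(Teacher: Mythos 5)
Your proposal is correct and follows essentially the same route as the paper: factor $\zeta_Y$ over the four characters of $K={\rm Gal}(Y/Z)$ via Proposition~\ref{prop:factorization}, identify each nontrivial $L$-factor with $\zeta_{X_H}/\zeta_Z$ through the lifting relation $(\ref{eq:Lfun1})$, and translate the resulting identity $\zeta_Y\zeta_Z^2=\zeta_{X_{\langle\rho\rangle}}\zeta_{X_{\langle\psi\rangle}}\zeta_{X_{\langle\rho\psi\rangle}}$ into characteristic polynomials via $(\ref{eq:zetacharacteristic})$. The only difference is that the paper stops at the zeta-function identity and leaves the conversion implicit, whereas you carry out the exponent bookkeeping explicitly (and correctly).
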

\begin{proof}
We apply Proposition \ref{prop:factorization} to
our case where $\Gamma=Y=\widetilde{{\rm puz}}(\theta_{a,b})$
and $\Delta=Z=X_K$.
The Galois group  $K={\rm Gal}(Y/Z)=\{{\rm id}, \rho, \psi, \rho\psi\}$
has the four characters, which is summarized in the following table.
\begin{equation}
 \label{tab:character}
 \begin{array}{c|cccc}
  & {\rm id} & \rho & \psi & \rho\psi \\
  \hline
   1 & 1 & 1 & 1 & 1 \\
   \chi_\rho & 1 & 1 & -1 & -1 \\
   \chi_{\psi} & 1 & -1 & 1 & -1 \\
   \chi_{\rho\psi} & 1 & -1 & -1 & 1 
 \end{array}
\end{equation}
Then $\chi_\rho$ is the lift of the non-trivial character
$\widetilde{\chi}_\rho$
of $K/\langle\rho\rangle$ and it follows from $(\ref{eq:Lfun1})$ that
\[
 L(u, \chi_\rho, Y/Z) =
 L(u, \widetilde{\chi}_\rho, X_{\langle\rho\rangle}/Z).
\]
In the same manner we have
\[
 L(u, \chi_\psi, Y/Z) =
 L(u, \widetilde{\chi}_\psi, X_{\langle\psi\rangle}/Z),
\]
and
\[
 L(u, \chi_{\rho\psi}, Y/Z) =
 L(u, \widetilde{\chi}_{\rho\psi}, X_{\langle\rho\psi\rangle}/Z).
\]
Then by applying the factorization formula $(\ref{eq:zetafactorization})$ 
to abelian covers $X_{\langle\rho\rangle}/Z, X_{\langle\psi\rangle}/Z$ and $X_{\langle\rho\psi\rangle}/Z$,
we have
\begin{equation}
 \label{eq:zetaLrho}
 \zeta_{X_{\langle\rho\rangle}}(u) = \zeta_Z(u)L(u,\chi_\rho, Y/Z),
\end{equation}
\begin{equation}
 \label{eq:zetaLpsi}
 \zeta_{X_{\langle\psi\rangle}}(u) = \zeta_Z(u)L(u,\chi_\psi, Y/Z),  
\end{equation}
and
\[
 \zeta_{X_{\langle\rho\psi\rangle}}(u) = \zeta_Z(u)L(u,\chi_{\rho\psi}, Y/Z).
\]
Consequently, by applying $(\ref{eq:zetafactorization})$ to the abelian cover $Y/Z$,
we obtain
\begin{eqnarray*}
 \zeta_Y(u) & = & \zeta_Z(u) L(u,\chi_\rho, Y/Z) L(u,\chi_\psi, Y/Z) L(u,\chi_{\rho\psi}, Y/Z)\\
 & = & \zeta_Z(u)\frac{\zeta_{X_{\langle\rho\rangle}}(u)}{\zeta_Z(u)} \frac{\zeta_{X_{\langle\psi\rangle}}(u)}{\zeta_Z(u)}
  \frac{\zeta_{X_{\langle\rho\psi\rangle}}(u)}{\zeta_Z(u)},
\end{eqnarray*}
from which we have
\[
 \zeta_Y(u)\zeta_Z(u)^2 = \zeta_{X_{\langle\rho\rangle}}(u) \zeta_{X_{\langle\psi\rangle}}(u) \zeta_{X_{\langle\rho\psi\rangle}}(u).
\]
\end{proof}

\begin{lem}
 \label{lem:frobenius}
 Let $Y=\widetilde{\rm puz}_0(\theta_{a,b})$,
 let $Z = Y/K$ and
 let $C$ be a prime cycle of $Z$.
 Then we have
 \[
 \nu(C) \mbox{ is even } \Longleftrightarrow  \left(\frac{Y/Z}{C}\right) \in \left\{{\rm id}, \rho\psi\right\}.
 \]
 %which is clearly equivalent to
 %\[
 %\nu(C) \mbox{ is odd } \Longleftrightarrow  \left(\frac{Y/Z}{C}\right) \in \left\{\rho, \psi\right\}.
 %\]
\end{lem}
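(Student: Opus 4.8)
The plan is to exploit the fact that $Y$ is bipartite and to track how each element of $K$ interacts with this bipartition. Recall that $Y=\widetilde{\rm puz}_0(\theta_{a,b})$ is isomorphic to $Y(G,S_1)={\tt Cay}(G\times C_2, S_1\times\{1\})$, whose vertex set $G\times C_2$ splits into the two parts $B=G\times\{0\}$ and $W=G\times\{1\}$, with every edge joining $B$ to $W$. In puzzle terms the part of a vertex $f$ is recorded by $c_f\in\{0,1\}$: it is $0$ when the blank sits at $v_0$ and $1$ when the blank sits at $v_{a+1}$, and a single move (an edge of $\widetilde{\rm puz}$) always slides the blank between $v_0$ and $v_{a+1}$, hence flips $c_f$. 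Thus the bipartition of $Y$ is intrinsic and holds in all the cases under consideration.

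First I would determine, for each $g\in K$, whether the right action $f\mapsto fg$ preserves or interchanges $B$ and $W$. The blank vertex of $fg$ is $(fg)^{-1}(0)=g^{-1}(f^{-1}(0))$, and both $\rho$ and $\psi$ are involutions that exchange $v_0$ and $v_{a+1}$ (this is visible from the disjoint-transposition expressions in the proof of Lemma \ref{lem:rhopsiAuto}, where each of $\rho$ and $\psi$ contains the factor $(v_0,v_{a+1})$). Consequently each of $\rho$ and $\psi$ sends a position with blank at $v_0$ to one with blank at $v_{a+1}$ and vice versa; that is, $\rho$ and $\psi$ \emph{interchange} $B$ and $W$, while ${\rm id}$ and their product $\rho\psi$ \emph{preserve} the bipartition.

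The second ingredient is the bipartite parity of a lift. Let $\widetilde{C}$ be any lift in $Y$ of the prime cycle $C$ of $Z$; its length equals $\nu(C)$ because the covering map $\pi$ is a graph morphism and lifts each edge to a single edge. Since $Y$ is bipartite, a path of length $\nu(C)$ has its endpoints $o(\widetilde{C})$ and $t(\widetilde{C})$ in the same part exactly when $\nu(C)$ is even, and in opposite parts exactly when $\nu(C)$ is odd.

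Finally I would combine the two observations using the defining property of the Frobenius automorphism $g=\left(\frac{Y/Z}{C}\right)$, namely $t(\widetilde{C})=o(\widetilde{C})\,g$ (well defined independently of $\widetilde{C}$ by the proposition preceding this lemma). The endpoints lie in the same part if and only if $g$ preserves the bipartition, which by the first step holds if and only if $g\in\{{\rm id},\rho\psi\}$; and by the parity statement this occurs if and only if $\nu(C)$ is even. Chaining these equivalences gives precisely the asserted one. I do not anticipate a genuine obstacle; the only point demanding care is the bookkeeping in the first step, namely verifying that $\rho$ and $\psi$, acting on the right, flip the blank between $v_0$ and $v_{a+1}$ and therefore swap the two color classes, after which the bipartite parity argument closes the proof immediately.
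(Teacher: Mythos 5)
Your proof is correct and takes essentially the same route as the paper's: both identify the bipartition of $Y$ with the location of the blank ($v_0$ versus $v_{a+1}$), observe that $\rho$ and $\psi$ interchange the two parts while ${\rm id}$ and $\rho\psi$ preserve them, and conclude from the defining property $t(\widetilde{C})=o(\widetilde{C})\left(\frac{Y/Z}{C}\right)$ combined with bipartite parity of the lifted cycle. Your write-up merely spells out, via the transposition factor $(v_0,v_{a+1})$ in the expressions for $\rho$ and $\psi$, the step the paper compresses into ``since $\rho$ and $\psi$ move the blank,'' and handles the converse explicitly where the paper calls it clear.
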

\begin{proof}
 As we have explained in Section ${\ref{sec:construction}}$,
 $Y\cong Y(G, S_i)$ is a bipartite graph.
 If two vertices (or positions) $f_1$ and $f_2$ are
 adjacent in $Y$, then the blanks of $f_1$ and $f_2$ do not coincide,
 that is,  $f_1^{-1}(0)\neq f_2^{-1}(0)$.
 Let $\widetilde{C}$ be the lift of $C$ to $Y$.
 If $\nu(C)$ is even and $\widetilde{C}$ starts
 from $f$ and terminate at $g$,
 then $f^{-1}(0) = g^{-1}(0)$ and $f\circ\left(\frac{Y/X}{C}\right) = g$,
 which means
 $\left(\frac{Y/X}{C}\right)\in \left\{{\rm id}, \rho\psi\right\}$
 since $\rho$ and $\psi$ move the blank.
 The converse is clear.
\end{proof}

\begin{thm}
 \label{thm:spectra2}
\begin{equation}
 \label{eq:charpolyrelation}
  \frac{P_{X_{\langle\rho\rangle}}(x)}{P_{Z}(x)}
 =
 (-1)^{|G|/2}\frac{P_{X_{\langle\psi\rangle}}(-x)}{P_{Z}(-x)}
\end{equation}
\begin{equation}
 \label{eq:charpolyrelation2}
  P_{X_{\langle\rho\psi\rangle}}(x)
 =
 (-1)^{|G|/2}P_{Z}(x)P_{Z}(-x)
\end{equation}
\end{thm}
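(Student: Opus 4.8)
The plan is to derive both identities from a single functional equation relating the Artin $L$-functions of $Y/Z$ under the substitution $u\mapsto -u$, and then to translate the resulting zeta-function relations into characteristic polynomials through the dictionary $(\ref{eq:zetacharacteristic})$. The crucial input is Lemma $\ref{lem:frobenius}$, which says that the parity of $\nu(C)$ is governed by whether the Frobenius $\left(\frac{Y/Z}{C}\right)$ lies in $\{{\rm id},\rho\psi\}$ or in $\{\rho,\psi\}$. Reading this against the character table $(\ref{tab:character})$, I observe that for every prime $[C]$ of $Z$,
\[
 (-1)^{\nu(C)} = \chi_{\rho\psi}\left(\frac{Y/Z}{C}\right),
\]
since $\chi_{\rho\psi}$ equals $+1$ exactly on $\{{\rm id},\rho\psi\}$ and $-1$ exactly on $\{\rho,\psi\}$.

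Substituting $u\mapsto -u$ into the Euler product $(\ref{eq:Lfundef})$ and using $(-u)^{\nu(C)}=(-1)^{\nu(C)}u^{\nu(C)}=\chi_{\rho\psi}\!\left(\frac{Y/Z}{C}\right)u^{\nu(C)}$ then gives, for any character $\chi$ of $K$,
\[
 L(-u,\chi,Y/Z)=L(u,\chi\cdot\chi_{\rho\psi},Y/Z).
\]
From the character table, $\chi_\rho\cdot\chi_{\rho\psi}=\chi_\psi$ and $1\cdot\chi_{\rho\psi}=\chi_{\rho\psi}$, so in particular
\[
 L(-u,\chi_\rho,Y/Z)=L(u,\chi_\psi,Y/Z),\qquad \zeta_Z(-u)=L(-u,1,Y/Z)=L(u,\chi_{\rho\psi},Y/Z),
\]
where I have used $L(u,1,Y/Z)=\zeta_Z(u)$. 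Recalling from the proof of Theorem $\ref{thm:spectra1}$ (equations $(\ref{eq:zetaLrho})$, $(\ref{eq:zetaLpsi})$ and the analogous one for $\chi_{\rho\psi}$) that $L(u,\chi_\rho,Y/Z)=\zeta_{X_{\langle\rho\rangle}}(u)/\zeta_Z(u)$, $L(u,\chi_\psi,Y/Z)=\zeta_{X_{\langle\psi\rangle}}(u)/\zeta_Z(u)$, and $L(u,\chi_{\rho\psi},Y/Z)=\zeta_{X_{\langle\rho\psi\rangle}}(u)/\zeta_Z(u)$, these two equalities become
\[
 \frac{\zeta_{X_{\langle\rho\rangle}}(u)}{\zeta_Z(u)}=\frac{\zeta_{X_{\langle\psi\rangle}}(-u)}{\zeta_Z(-u)},\qquad \zeta_{X_{\langle\rho\psi\rangle}}(u)=\zeta_Z(u)\,\zeta_Z(-u).
\]

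Finally I would convert these into the stated polynomial relations via $(\ref{eq:zetacharacteristic})$. Writing $x=2u+\tfrac1u$ and noting $1-(-u)^2=1-u^2$ and $P(-x)=P\!\left(-(2u+\tfrac1u)\right)$, the powers of $(1-u^2)$ cancel on both sides of each identity, so the entire sign comes from comparing $u^{-2|G|/|H|}$ with $(-u)^{-2|G|/|H|}$. For $(\ref{eq:charpolyrelation2})$ the vertex counts $|V_Z|=|G|/2$ give a $u$-power of $-|G|/2$ in each $\zeta_Z$ factor, and $u^{-|G|/2}(-u)^{-|G|/2}=(-1)^{|G|/2}u^{-|G|}$ matches the $u^{-|G|}$ coming from $\zeta_{X_{\langle\rho\psi\rangle}}$; clearing the common analytic factors and taking reciprocals yields $P_{X_{\langle\rho\psi\rangle}}(x)=(-1)^{|G|/2}P_Z(x)P_Z(-x)$. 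The identical bookkeeping applied to the first zeta identity yields $(\ref{eq:charpolyrelation})$. Since $x=2u+\tfrac1u$ takes infinitely many values, each identity of rational functions in $u$ forces the corresponding polynomial identity in $x$.

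The one step demanding care, and the main obstacle, is precisely this last sign accounting: one must use $|V_{X_H}|=2|G|/|H|$ to pin down the exponents and track $(-u)^{-|G|/2}=(-1)^{|G|/2}u^{-|G|/2}$, as this single factor is the sole source of the $(-1)^{|G|/2}$ in both $(\ref{eq:charpolyrelation})$ and $(\ref{eq:charpolyrelation2})$. Everything upstream is a clean consequence of the $\chi_{\rho\psi}$-twisted functional equation furnished by Lemma $\ref{lem:frobenius}$.
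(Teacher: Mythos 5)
Your proof is correct and takes essentially the same route as the paper: the paper likewise uses Lemma $\ref{lem:frobenius}$ together with the character table $(\ref{tab:character})$ to prove $L(u,\chi_\rho,Y/Z)=L(-u,\chi_\psi,Y/Z)$ (by case analysis on the parity of $\nu(C)$, which is just your identity $(-1)^{\nu(C)}=\chi_{\rho\psi}\left(\frac{Y/Z}{C}\right)$ unpacked) and then translates into characteristic polynomials via $(\ref{eq:zetacharacteristic})$. Your packaging of the key step as the single twisted functional equation $L(-u,\chi,Y/Z)=L(u,\chi\cdot\chi_{\rho\psi},Y/Z)$, with $\chi=\chi_\rho$ giving $(\ref{eq:charpolyrelation})$ and the trivial character giving $(\ref{eq:charpolyrelation2})$, is a mildly more unified formulation than the paper's (which handles the second relation only by saying it goes ``in the same manner''), and your explicit bookkeeping of the exponents $u^{-2|G|/|H|}$ correctly accounts for the factor $(-1)^{|G|/2}$ that the paper leaves implicit.
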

\begin{proof}
 By $(\ref{eq:zetacharacteristic})$,$(\ref{eq:zetaLrho})$ and $(\ref{eq:zetaLpsi})$,
 to prove $(\ref{eq:charpolyrelation})$, it suffices to show
 \begin{equation}
  \label{eq:LeqRhoPsi}
  L(u, \chi_\rho, Y/Z) = L(-u, \chi_\psi, Y/Z).
 \end{equation}
 Let $C$ be a prime cycle in $Z$ and $\widetilde{C}$ its lift
 to $Y$.
 If $\nu(C)$ is an even integer, then by Lemma $\ref{lem:frobenius}$, we have
 \[
  \left(\frac{Y/Z}{C}\right) \in \{{\rm id}, \rho\psi\}.
 \]
 Further, by the table $(\ref{tab:character})$,
 we have $\chi_\rho\left(\frac{Y/Z}{C}\right)=\chi_\psi\left(\frac{Y/Z}{C}\right)$
 and the corresponding factors in $(\ref{eq:Lfundef})$
 of the $L$-functions coincide:
 \begin{equation}
  \label{eq:factoreq}
  1-\chi_\rho\left(\frac{Y/Z}{C}\right) u^{\nu(C)}
 =
  1-\chi_\psi\left(\frac{Y/Z}{C}\right) (-u)^{\nu(C)}.
 \end{equation}
 If $\nu(C)$ is an odd integer, then by Lemma $\ref{lem:frobenius}$, we have
 \[
  \left(\frac{Y/Z}{C}\right) \in \{\rho, \psi\}.
 \]
 Further, by the table $(\ref{tab:character})$, we obtain
 $\chi_\rho\left(\frac{Y/Z}{C}\right)=-\chi_\psi\left(\frac{Y/Z}{C}\right)$
 and $(\ref{eq:factoreq})$ holds. Thus we obtain
 $(\ref{eq:LeqRhoPsi})$.
 The relation $(\ref{eq:charpolyrelation2})$ can be obtained in the same manner.
\end{proof}

\begin{thm}
 Let $G,S_1$ and $S_2$ be as above.
 Then $X(G,S_1)$ is not isomorphic to $X(G, S_2)$.
\end{thm}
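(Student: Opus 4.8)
The plan is to prove the stronger statement that $X(G,S_1)=X_{\langle\rho\rangle}$ and $X(G,S_2)=X_{\langle\psi\rangle}$ are not even \emph{cospectral}. Since isomorphic graphs have permutation-similar adjacency matrices and hence identical characteristic polynomials, it suffices to show $P_{X_{\langle\rho\rangle}}\neq P_{X_{\langle\psi\rangle}}$. The first step is a clean logical reduction using Theorem \ref{thm:spectra2}. Because $Z=Y/K$ has $|V_Z|=|V_Y|/|K|=2|G|/4=|G|/2$ vertices, the quotient $R(x):=P_{X_{\langle\rho\rangle}}(x)/P_Z(x)$ is a monic polynomial of degree $|G|/2$ whose root multiset is exactly the non-shared spectral part $B_1$. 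Suppose for contradiction that the two graphs were isomorphic, hence cospectral, so $P_{X_{\langle\rho\rangle}}=P_{X_{\langle\psi\rangle}}$. Substituting this equality into \eqref{eq:charpolyrelation} gives
\[
 R(x)=(-1)^{|G|/2}\,R(-x),
\]
so $R$ is an even or odd polynomial and its roots satisfy $B_1=-B_1$. Thus the whole problem reduces to proving that $B_1$ is \emph{not} symmetric under negation.

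Next I would translate this into a closed-walk count, which is the computable handle on the symmetry. Writing $A_i$ for the adjacency matrix of $X(G,S_i)$, and using that $A$ is shared while $B_2=-B_1$, for every odd $k$ we have
\[
 \operatorname{Tr}(A_1^k)-\operatorname{Tr}(A_2^k)=\sum_{\mu\in B_1}\mu^k-\sum_{\mu\in B_2}\mu^k=2\sum_{\mu\in B_1}\mu^k .
\]
Hence $B_1=-B_1$ would force $\operatorname{Tr}(A_1^k)=\operatorname{Tr}(A_2^k)$ for all odd $k$. Because each $X(G,S_i)$ is a Cayley graph, it is vertex-transitive, so $\operatorname{Tr}(A_i^k)=|G|\,N_k(S_i)$ with
\[
 N_k(S_i)=\#\{(s_1,\dots,s_k)\in S_i^{\,k}\ :\ s_1 s_2\cdots s_k=\mathrm{id}\}.
\]
Therefore it suffices to exhibit a single odd $k$ with $N_k(S_1)\neq N_k(S_2)$: this yields $\sum_{\mu\in B_1}\mu^k\neq 0$, hence $B_1\neq -B_1$, contradicting cospectrality.

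The main obstacle is carrying this out uniformly over the whole family. The smallest candidate $k=1$ counts loops, i.e.\ $N_1(S_i)=[\mathrm{id}\in S_i]$. A short computation with the cycle types shows that $\sigma_{p_k}$ is always a single cycle while $\rho$ and $\psi$ are products of at least two disjoint transpositions, so $\mathrm{id}\notin S_1$ for every admissible $(a,b)$, and $\mathrm{id}\in S_2$ only for the special pair $(a,b)=(1,0)$ (where $\sigma_{p_2}=(v_0,v_{a+1})=\psi$). Thus $k=1$ already settles the single degenerate case $\theta_{1,0}$, where $N_1(S_1)=0\neq 1=N_1(S_2)$. For all other admissible parameters one has $\mathrm{id}\notin S_1\cup S_2$, so $\operatorname{Tr}(A_1)=\operatorname{Tr}(A_2)=0$ and $k=1$ gives no information; here I would pass to $k=3$, counting ordered triples of generators composing to the identity (equivalently, triangles through a vertex). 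Using the explicit generators $\sigma_1,\sigma_2,\sigma_3$ and $\tau_1,\tau_2,\tau_3$ in the manner of Example \ref{exm:theta1b}, together with the relation $S_2=S_1\kappa$ coming from $\psi=\rho(\rho\psi)$ (where $\kappa$ is the fixed permutation induced by $\rho\psi$ on $\{1,\dots,n\}$), I would compute $N_3(S_1)$ and $N_3(S_2)$ from the cycle types of the resulting products.

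I expect $N_3(S_1)\neq N_3(S_2)$ generically, and establishing this inequality (or, where it fails, the first odd length at which the two counts separate) for every admissible $(a,b)$ is the crux of the argument. The worked examples confirm the mechanism and the expected sign of the discrepancy: for $\theta_{1,0}$ one finds $N_3(S_1)=2$ but $N_3(S_2)=7$ (so $\sum_{\mu\in B_1}\mu^3=\tfrac12(12-42)=-15\neq 0$), while for $A_5$ the degree-$30$ factor $(x^2+x-4)^4(x^2+x-1)^5(x^4-3x^3-2x^2+7x+1)^3$ is manifestly not invariant under $x\mapsto -x$, so some odd power sum of $B_1$ is nonzero even though $\sum_{\mu\in B_1}\mu=0$. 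Once the nonvanishing of one odd power sum is secured in general, $B_1\neq -B_1$, the cospectrality hypothesis is contradicted, and therefore $X(G,S_1)$ and $X(G,S_2)$ are non-isomorphic.
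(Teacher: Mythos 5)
Your reduction is sound as far as it goes: if $X(G,S_1)\cong X(G,S_2)$ they are cospectral, and combining $P_{X_{\langle\rho\rangle}}=P_{X_{\langle\psi\rangle}}$ with Theorem \ref{thm:spectra2} indeed forces $R(x)=(-1)^{|G|/2}R(-x)$, i.e.\ $B_1=-B_1$; your trace identity $\operatorname{Tr}(A_1^k)-\operatorname{Tr}(A_2^k)=2\sum_{\mu\in B_1}\mu^k$ for odd $k$ and the translation $\operatorname{Tr}(A_i^k)=|G|\,N_k(S_i)$ are also correct, and your spot checks for $\theta_{1,0}$ and $A_5$ are accurate. But the proof is not complete, and the gap is exactly the step you defer: you never establish, for every admissible $(a,b)$, that some odd $k$ has $N_k(S_1)\neq N_k(S_2)$. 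Given Theorem \ref{thm:spectra2}, that claim is logically \emph{equivalent} to the non-cospectrality you are trying to prove (even power sums of the two spectra agree automatically, so the characteristic polynomials differ if and only if some odd trace differs), so as written the argument reformulates the theorem rather than proving it. Moreover your concrete candidate $k=3$ fails as soon as the generators satisfy no length-three relations: already for $\theta_{1,2}$ one checks from the explicit $\sigma_i,\tau_i$ of Example \ref{exm:theta1b} that $N_3(S_1)=N_3(S_2)=0$ (no triangles), and for larger $n$ the odd girth grows, so "the first odd length at which the counts separate" is precisely the quantity you have no uniform handle on.

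The paper closes this gap structurally rather than by walk counting. It observes that two vertices of $Y$ lying in the same $\rho$-orbit (resp.\ $\psi$-orbit) are at odd distance, so $X_{\langle\rho\rangle}$, $X_{\langle\psi\rangle}$ and $Z$ are non-bipartite and $Z$ carries prime cycles of odd length; by Lemma \ref{lem:frobenius} such odd primes have Frobenius in $\{\rho,\psi\}$, where $\chi_\rho$ and $\chi_\psi$ take opposite values, and together with $(\ref{eq:LeqRhoPsi})$ this yields $L(u,\chi_\rho,Y/Z)\neq L(-u,\chi_\rho,Y/Z)=L(u,\chi_\psi,Y/Z)$, whence $\zeta_{X_{\langle\rho\rangle}}\neq\zeta_{X_{\langle\psi\rangle}}$ by $(\ref{eq:zetaLrho})$ and $(\ref{eq:zetaLpsi})$, and non-isomorphism follows. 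In effect the paper's odd prime cycles play the role of your odd-$k$ closed walks, but the needed asymmetry is extracted from the Galois covering structure of $Y/Z$ rather than from an explicit count. To salvage your route you would need an analogous structural input guaranteeing a differing odd-length cycle or walk count for all $(a,b)$ — at which point you would essentially have reconstructed the paper's $L$-function argument.
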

\begin{proof}
 As we have defined $X_{\langle\rho\rangle}\cong X(G, S_1)$ 
 (resp. $X_{\langle\psi\rangle}\cong X(G, S_2)$)
 as the quotient $Y/\langle\rho\rangle$
 (resp. $Y/\langle\psi\rangle$),
 if two vertices of $Y$ are on the same $\rho$-orbit 
 (resp. $\psi$-orbit), they are at odd distance.
 Hence paths connecting two vertices on $\rho$-orbit (resp. $\psi$-orbit)
 are projected onto cycles of odd length in $X_{\langle\rho\rangle}$
 (resp. $X_{\langle\psi\rangle}$).
 Thus $X_{\langle\rho\rangle}$ and $X_{\langle\psi\rangle}$
 are non-bipartite.
 For the same reason, $Z=Y/\langle\rho,\psi\rangle$ is also a non-bipartite graph
 containing prime cycles of odd length.
 Therefore we have
 \[
  L(u,\chi_{\rho}, Y/Z) \neq L(-u,\chi_{\rho}, Y/Z) = L(u,\chi_{\psi}, Y/Z).
 \]
 Hence
 \[
 \zeta_{X_{\langle\rho\rangle}}(u) = \zeta_{Z}(u)L(u,\chi_{\rho}, Y/Z)
 \neq
 \zeta_{Z}(u)L(u,\chi_{\psi}, Y/Z) 
 =
 \zeta_{X_{\langle\psi\rangle}}(u),
 \]
 which completes the proof.
\end{proof}

\begin{center}
 \begin{figure}[H]
\begin{center}
 \begin{tikzpicture}[scale=0.7]
  \small
  \begin{scope}[yshift=3cm]
  \foreach \theta in {0,60,...,300}{
  \draw[fill] (\theta:1.5) circle (0.05);
  \draw[fill] (\theta:3) circle (0.05);
  \draw (\theta:1.5) -- ({\theta+60}:1.5);
  \draw (\theta:1.5) -- ({\theta}:3);
  \draw (\theta:3) -- ({\theta+60}:3);
  }
  \draw (0,3.5) node {$Y=\widetilde{{\rm puz}}(\theta_{1,0})$};
  \draw (0,-4) node {$(x-3)(x-2)^2(x-1)x^4(x+1)(x+2)^2(x+3)$};
   \end{scope}

  \draw[line width = 2, gray] (0,-2) -- (0,-4);
  \draw[line width = 2, gray] (-3,-2) -- (-6,-4);
  \draw[line width = 2, gray] (3,-2) -- (6,-4);
  \draw[line width = 2, gray] (-6,-10) -- (-3,-13);
  \draw[line width = 2, gray] (6,-10) -- (3,-13);
  \draw[line width = 2, gray] (0,-10) -- (0,-13);

  \begin{scope}[xshift=-8cm, yshift=-7cm]
  \draw[fill] (0:1) circle (0.05) to ++(0:1);
  \draw[fill] (120:1) circle (0.05) to ++(120:1);
  \draw[fill] (240:1) circle (0.05) to ++(240:1);
  \draw[fill] (0:2) circle (0.05);
  \draw[fill] (120:2) circle (0.05);
  \draw[fill] (240:2) circle (0.05);
   \draw (0:1) -- (120:1) -- (240:1) -- cycle;
   \draw (0:2) -- (120:2) -- (240:2) -- cycle;
  \draw (0,2.5) node {$X_{\langle\rho\rangle}=X({\mathfrak S}_3, S_1)$};
  \draw (0,-2.5) node {$(x-3)(x-1)x^2(x+2)^2$};
  \end{scope}

  \begin{scope}[yshift=-7cm]
   \foreach \theta/\i in {0/0,60/1,120/2,180/3,240/4,300/5}{
   \draw[fill] (\theta:1.5) circle (0.05);
   \draw (\theta:1.5) -- ({\theta+60}:1.5);
   \node  at (\theta:1.5) (\i){};
   \path (\i) edge[loop, looseness=10, ->, >=latex, in={60+\theta}, out={-60+\theta}] (\i);
   }
  \draw (-1,2.5) node {$X_{\langle\psi\rangle}=X({\mathfrak S}_3, S_2)$};
  \draw (0,-2.5) node {$(x-3)(x-2)^2x^2(x+1)$};
  \end{scope}

  \begin{scope}[yshift=-7cm,xshift=7cm]
  \foreach \x in {0,2,4}{
  \draw[fill] (\x,1) circle (0.05);
  \draw[fill] (\x,-1) circle (0.05);
  \draw (\x,1) -- ++(0,-2);
  }
   \draw (0,1) -- ++(2,-2);
   \draw (0,1) -- ++(4,-2);
   \draw (2,1) -- ++(-2,-2);
   \draw (2,1) -- ++(2,-2);
   \draw (4,1) -- ++(-2,-2);
   \draw (4,1) -- ++(-4,-2);
  \draw (0,2.5) node {$X_{\langle\rho\psi\rangle}$};
  \draw (2,-2.5) node {$(x-3)x^4(x+3)$};
  \end{scope}

  \begin{scope}[yshift=-16cm]
   \draw[fill] (0:1.5) circle (0.05);
   \draw[fill] (120:1.5) circle (0.05);
   \draw[fill] (240:1.5) circle (0.05);
   \node  at (0:1.5) (0){};
   \node at (120:1.5) (1){};
   \node at (240:1.5) (2){};
   \draw (0:1.5) -- (120:1.5) -- (240:1.5)--cycle;
   \path (0) edge[loop, looseness=10, ->, >=latex, in=60, out=-60] (0);
   \path (1) edge[loop, looseness=10, ->, >=latex, in=180, out=60] (1);
   \path (2) edge[loop, looseness=10, ->, >=latex, in=300, out=180] (2);
  \draw (-1,2.5) node {$Z=X_K$};
  \draw (0,-2.5) node {$(x-3)x^2$};
  \end{scope}
 \end{tikzpicture}
 \caption{
 The statements of Theorem $\ref{thm:spectra1}$ and $\ref{thm:spectra2}$
 hold for this exceptional case $\theta_{1,0}$. 
 }
 \label{fig:theta10covering}
\end{center}
 \end{figure}
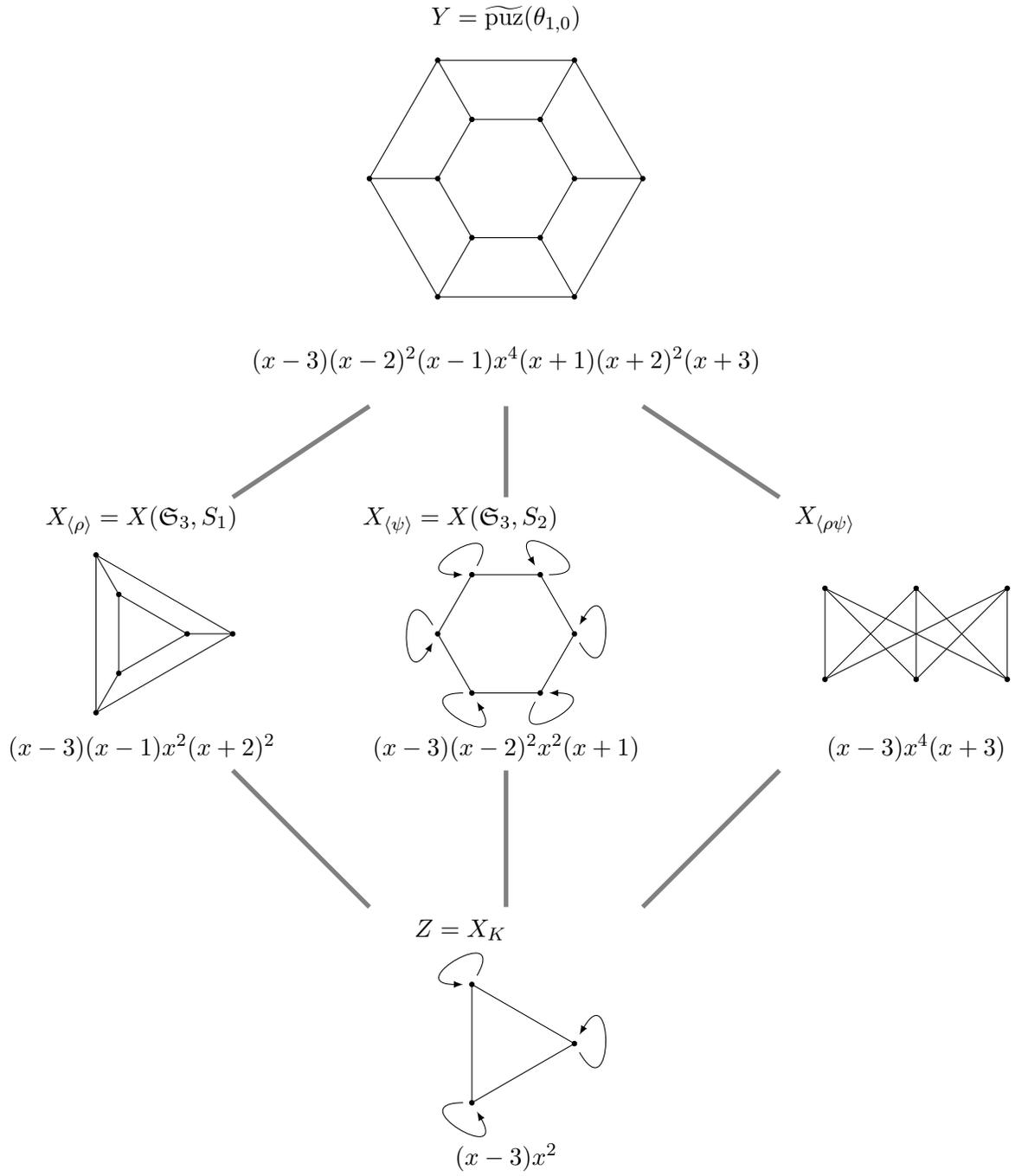
\end{center}

\section*{Acknowledgments}
This research was  partially supported by JSPS KAKENHI Grant Number JP20K03558
and JP20K03659.

%\bibliographystyle{plain}
%\bibliography{nonisopairs}

\begin{thebibliography}{1}

\bibitem{bass1992ihara}
Hyman Bass.
\newblock The {I}hara-{S}elberg zeta function of a tree lattice.
\newblock {\em International Journal of Mathematics}, 3(06):717--797, 1992.

\bibitem{brooks1999sunada}
Robert Brooks.
\newblock The sunada method.
\newblock {\em Contemporary Mathematics}, 231:25--36, 1999.

\bibitem{buser2006cayley}
Peter Buser.
\newblock Cayley graphs and planar isospectral domains.
\newblock In {\em Geometry and Analysis on Manifolds: Proceedings of the 21st
  International Taniguchi Symposium held at Katata, Japan, Aug. 23--29 and the
  Conference held at Kyoto, Aug. 31--Sept. 2, 1987}, pages 64--77. Springer,
  2006.

\bibitem{hanaoka20235}
Erika Hanaoka and Taizo Sadahiro.
\newblock The 5-puzzle doubly covers the soccer ball.
\newblock {\em Discrete Applied Mathematics}, 341:164--168, 2023.

\bibitem{ihara1966discrete}
Yasutaka Ihara.
\newblock On discrete subgroups of the two by two projective linear group over
  p-adic fields.
\newblock {\em Journal of the Mathematical Society of Japan}, 18(3):219--235,
  1966.

\bibitem{levin2017markov}
David~A Levin and Yuval Peres.
\newblock {\em Markov chains and mixing times}, volume 107.
\newblock American Mathematical Soc., 2017.

\bibitem{terras2010zeta}
Audrey Terras.
\newblock {\em Zeta functions of graphs: a stroll through the garden}, volume
  128.
\newblock Cambridge University Press, 2010.

\bibitem{wilson1974graph}
Richard~M Wilson.
\newblock Graph puzzles, homotopy, and the alternating group.
\newblock {\em Journal of Combinatorial Theory, Series B}, 16(1):86--96, 1974.

\end{thebibliography}

\end{document}